\theoremstyle{plain}
\newtheorem{theorem}{Theorem}[section]
\newtheorem{lemma}[theorem]{Lemma}
\newtheorem{proposition}[theorem]{Proposition}
\newtheorem{corollary}[theorem]{Corollary}
\newtheorem*{definition}{Definition}
\newtheorem*{remark}{Remark}
\newtheorem*{theorem:MainThm}{Theorem \ref{theorem:MainThm}}
\newtheorem*{corollary:MainCor}{Corollary \ref{corollary:MainCor}}
\begin{document}

\title{Affine Invariant Submanifolds with Completely Degenerate Kontsevich-Zorich Spectrum}
\author{David Aulicino\thanks{This material is based upon work supported by the National Science Foundation under Award No. DMS - 1204414.}}
\date{}

\newcommand{\splin}{$\text{SL}_2(\mathbb{R})$}
\newcommand{\spolin}{$\text{SO}_2(\mathbb{R})$}
\newcommand{\RankOne}{$\mathcal{D}_g (1)$}
\newcommand{\RankOneF}{$\mathcal{D}_5 (1)$}
\newcommand{\nc}{\newcommand}

\nc\bB{\mathbb{B}}
\nc\bC{\mathbb{C}}
\nc\bD{\mathbb{D}}
\nc\bE{\mathbb{E}}
\nc\bF{\mathbb{F}}
\nc\bG{\mathbb{G}}
\nc\bH{\mathbb{H}}
\nc\bI{\mathbb{I}}
\nc{\bJ}{\mathbb{J}}
\nc\bK{\mathbb{K}}
\nc\bL{\mathbb{L}}
\nc\bM{\mathbb{M}}
\nc\bN{\mathbb{N}}
\nc\bO{\mathbb{O}}
\nc\bP{\mathbb{P}}
\nc\bQ{\mathbb{Q}}
\nc\bR{\mathbb{R}}
\nc\bS{\mathbb{S}}
\nc\bT{\mathbb{T}}
\nc\bU{\mathbb{U}}
\nc\bV{\mathbb{V}}
\nc\bW{\mathbb{W}}
\nc\bY{\mathbb{Y}}
\nc\bX{\mathbb{X}}
\nc\bZ{\mathbb{Z}}

\nc\cA{\mathcal{A}}
\nc\cB{\mathcal{B}}
\nc\cC{\mathcal{C}}
\nc\cD{\mathcal{D}}
\nc\cE{\mathcal{E}}
\nc\cF{\mathcal{F}}
\nc\cG{\mathcal{G}}
\nc\cH{\mathcal{H}}
\nc\cI{\mathcal{I}}
\nc{\cJ}{\mathcal{J}}
\nc\cK{\mathcal{K}}
\nc\cM{\mathcal{M}}
\nc\cN{\mathcal{N}}
\nc\cO{\mathcal{O}}
\nc\cP{\mathcal{P}}
\nc\cQ{\mathcal{Q}}
\nc\cS{\mathcal{S}}
\nc\cT{\mathcal{T}}
\nc\cU{\mathcal{U}}
\nc\cV{\mathcal{V}}
\nc\cW{\mathcal{W}}
\nc\cY{\mathcal{Y}}
\nc\cX{\mathcal{X}}
\nc\cZ{\mathcal{Z}}

\maketitle



\begin{abstract}
We prove that if the Lyapunov spectrum of the Kontsevich-Zorich cocycle over an affine \splin -invariant submanifold is completely degenerate, i.e. $\lambda_2 = \cdots = \lambda_g = 0$, then the submanifold must be an arithmetic Teichm\"uller curve in the moduli space of Abelian differentials over surfaces of genus three, four, or five.  As a corollary, we prove that there are at most finitely many such Teichm\"uller curves.
\end{abstract}

\section{Introduction}

The Lyapunov exponents of the Kontsevich-Zorich cocycle give information about the dynamics of numerous systems such as billiards in polygons with rational angles, interval exchange transformations, and the Teichm\"uller geodesic flow on the moduli space of Abelian differentials on Riemann surfaces.  More precisely, we consider the Lyapunov exponents of the Kontsevich-Zorich cocycle on the absolute cohomology bundle over affine \splin -invariant submanifolds of the moduli space of Abelian differentials on genus $g$ surfaces.  These exponents were studied extensively in \cite{ForniDev}, \cite{AvilaVianaSimp}, and \cite{EskinKontsevichZorich2}.  In \cite{ForniDev}, it was proven that the smallest non-negative exponent is always positive with respect to the canonical measures on strata of the moduli space of Abelian differentials.  In \cite{AvilaVianaSimp}, the Kontsevich-Zorich conjecture was verified using techniques independent of \cite{ForniDev}, i.e. the spectrum of $2g$ exponents of the cocycle is simple with respect to the canonical measures on strata.  Explicit formulas for sums of the positive Lyapunov exponents of the Kontsevich-Zorich cocycle were given in \cite{EskinKontsevichZorich2}.  On the contrary, there are examples of orbit closures where the Lyapunov exponents are zero.  An affine invariant submanifold with $\lambda_2 = \cdots = \lambda_g = 0$ is said to have completely degenerate KZ-spectrum.  In this paper, we prove:

\begin{theorem:MainThm}
If $\mathcal{M}$ is an affine invariant submanifold with completely degenerate KZ-spectrum, then $\mathcal{M}$ is an arithmetic Teichm\"uller curve.
\end{theorem:MainThm}

Since rational billiards do not necessarily have generic orbit closures, it is important to understand all orbit closures in order to understand rational billiards.  From this perspective, the study of which orbit closures admit zero Lyapunov exponents is essential to understanding the dynamics on a specific rational billiard table.

A relation between the geometry of the Hodge bundle and the Lyapunov exponents has been known for some time.  Specifically, a connection between the curvature of the Hodge bundle and the neutral Oseledec bundle was studied in \cite{ForniMatheusZorichLyapSpectHodge}.  In \cite{ForniMatheusZorichZeroLyapExpsHodge} it was conjectured that there are exactly two mechanisms that produce zero Lyapunov exponents.  The Forni subspace mechanism defined below will be the only one relevant to this paper.

In \cite{ForniHand}, Forni gives an example of a genus three square-tiled surface generating a Teichm\"uller curve with completely degenerate KZ-spectrum.  In \cite{ForniMatheusZorichSqTiled}, Forni, Matheus, and Zorich found an example of a genus four square-tiled surface generating a Teichm\"uller curve that also has completely degenerate KZ-spectrum, and proved that these two examples are the only square-tiled \emph{cyclic} covers with completely degenerate KZ-spectrum in any genus.  M\"oller \cite{MollerShimuraTeich} found that these are the only examples of Teichm\"uller curves with completely degenerate KZ-spectrum with possible exceptions in the moduli space of Abelian differentials on genus five surfaces.  Without assuming any of the structure of orbit closures established in \cite{EskinMirzakhaniInvariantMeas} and \cite{EskinMirzakhaniMohammadiOrbitClosures} parts of Theorem \ref{theorem:MainThm} were proven for low genus in \cite{AulicinoCompDegKZ}.  In \cite{EskinKontsevichZorich2}, it was shown that there are no such regular affine \splin -invariant submanifolds for $g \geq 7$.  Since then, \cite{AvilaMatheusYoccozRegAIS} showed that every orbit closure is regular.

Combining Theorem \ref{theorem:MainThm} with the results recalled above implies that the only affine invariant submanifolds with completely degenerate KZ-spectrum are the two known Teichm\"uller curves in genus three and four, and any other such affine invariant submanifold must be an arithmetic Teichm\"uller curve in genus five.  Furthermore, the proof of Theorem \ref{theorem:MainThm} yields the following corollary.

\begin{corollary:MainCor}
There are at most finitely many Teichm\"uller curves with completely degenerate KZ-spectrum.
\end{corollary:MainCor}

The results in this paper rely on the recent fundamental work of Eskin and Mirzakhani \cite{EskinMirzakhaniInvariantMeas}, and Eskin, Mirzakhani, and Mohammadi \cite{EskinMirzakhaniMohammadiOrbitClosures}, which established that all \splin -orbit closures are affine \splin -invariant submanifolds.  Furthermore, the results of Avila, Eskin, and M\"oller \cite{AvilaEskinMollerForniBundle}, Wright \cite{WrightFieldofDef}, and M\"oller \cite{MollerShimuraTeich} are essential ingredients in the proof of Theorem \ref{theorem:MainThm}.

\

\noindent \textbf{Acknowledgments:}  The author would like to thank Alex Eskin for introducing him to this project and for explaining many of the ideas involved in it.  The author is also very grateful to Alex Eskin for the generosity of his time and patience throughout the discussions, and for carefully reading earlier drafts of this manuscript.  The author would also like to thank Martin M\"oller for contributing the strategy for the proof of Theorem \ref{theorem:MainThm}.  The author is grateful to Alex Wright for the careful explanations of his work and for many helpful discussions.

\section{Preliminaries}

\noindent \textbf{Strata of Abelian Differentials:}  Let $X$ be a Riemann surface of genus $g \geq 2$ carrying an Abelian differential $\omega$.  If $\omega$ is holomorphic, then it has zeros of total order $2g-2$.  Let $\kappa$ denote a partition of $2g-2$.  We consider the set of all pairs $(X,\omega)$, where the orders of the zeros of $\omega$ are prescribed by $\kappa$.  This set, denoted $\mathcal{H}(\kappa)$ is called a \emph{stratum of Abelian differentials}.  We assume throughout that $(X,\omega)$ has unit area with respect to the area form induced by $\omega$.

\

\noindent \textbf{\splin ~Action:} An Abelian differential $\omega$ induces a flat structure on $X$.  There is a natural action by \splin ~on this structure defined by embedding its charts in $\mathbb{R}^2$ and multiplying by an element of \splin .  Furthermore, the action preserves the area of $X$ with respect to $\omega$.  The action by the subgroup of diagonal matrices is called the \emph{Teichm\"uller geodesic flow}.

\

\noindent \textbf{Period Coordinates:} Let $\Sigma$ denote the set of zeros of $\omega$.  If we fix a basis $\{\gamma_1, \ldots, \gamma_k\}$ for $H_1(X,\Sigma, \mathbb{Z})$, then we get a local map into relative cohomology
$$\Phi(X,\omega) := \left( \int_{\gamma_1} \omega, \ldots, \int_{\gamma_k} \omega \right) \in H^1(X,\Sigma, \mathbb{C}).$$

\

\noindent \textbf{Orbit Closures:} It was proven in \cite{EskinMirzakhaniInvariantMeas} and \cite{EskinMirzakhaniMohammadiOrbitClosures}, that the closure of every \splin ~orbit in $\mathcal{H}(\kappa)$ is an affine \splin -invariant submanifold $\mathcal{M}$ that admits a finite \splin -invariant measure $\nu$, which is affine with respect to period coordinates.  For simplicity, we abbreviate the term \emph{affine \splin -invariant submanifold} by \emph{AIS}.

The tangent space of $\mathcal{M}$ can be given in period coordinates as a subspace $T_{\mathbb{C}}(\mathcal{M})$.  It satisfies $T_{\mathbb{C}}(\mathcal{M}) = \mathbb{C} \otimes T_{\mathbb{R}}(\mathcal{M})$, where $T_{\mathbb{R}}(\mathcal{M}) \subset H^1(X,\Sigma,\mathbb{R})$.  Let $p: H^1(X,\Sigma,\mathbb{R}) \rightarrow H^1(X,\mathbb{R})$ be the natural projection to absolute cohomology.

If $\dim T_{\mathbb{R}}(\mathcal{M}) = 2$, then $\mathcal{M}$ is called a \emph{Teichm\"uller curve} and any surface $(X,\omega)$ generating a Teichm\"uller curve is called a \emph{Veech surface}.  (This is not the definition of a Veech surface, but a theorem of John Smillie.)  If the Veech surface is square-tiled, then it generates an arithmetic Teichm\"uller curve.

\

\noindent \textbf{Lyapunov Exponents:} The bundle $H^1_{\mathbb{F}}$ over $\mathcal{H}(\kappa)$ is the bundle with fibers $H^1(X,\mathbb{F})$ and a flat connection given by identifying nearby lattices $H^1(X,\mathbb{Z})$ and $H^1(X',\mathbb{Z})$.  If $\mathcal{M}$ is an AIS, then the Teichm\"uller geodesic flow acts on every point in $\mathcal{M}$ and thus, induces a flow on $H^1_{\mathbb{R}}$.  This flow is known as the \emph{Kontsevich-Zorich cocycle} (KZ-cocycle).

If we consider orbits under the Teichm\"uller geodesic flow that return infinitely many times to a neighborhood of the starting point, then it is possible to compute the monodromy matrix $A(t)$ at each return time $t$.  By computing the logarithms of the eigenvalues of the $A(t)A^T(t)$, normalizing them by twice the length of the geodesic at time $t$, and letting $t$ tend to infinity, we get a collection of $2g$ numbers known as the \emph{spectrum of Lyapunov exponents of the KZ-cocycle}.  By the Oseledec multiplicative ergodic theorem, these numbers will not depend on the initial starting point for $\nu$-almost every choice of initial data.  Since cohomology admits a symplectic basis, this spectrum is symmetric and therefore it suffices to consider the set of $g$ non-negative Lyapunov exponents of the KZ-cocycle
$$1 = \lambda_1^{\nu} \geq \cdots \geq \lambda_g^{\nu} \geq 0,$$
which will be known as the \emph{KZ-spectrum}.  We will suppress the measure from now on and always assume it to be the canonical measure guaranteed by \cite{EskinMirzakhaniInvariantMeas}.

If the Lyapunov exponents of the Kontsevich-Zorich cocycle over an AIS $\mathcal{M}$ satisfy
$$1 = \lambda_1 > \lambda_2 = \cdots = \lambda_g = 0,$$
then $\mathcal{M}$ has \emph{completely degenerate KZ-spectrum}.  The first such example in genus three was found in \cite{ForniHand} and another example in genus four by \cite{ForniMatheusZorichSqTiled}.  Further results on this question nearing a complete classification of all such AIS with completely degenerate KZ-spectrum were established in \cite{MollerShimuraTeich}, \cite{EskinKontsevichZorich2}, and \cite{AulicinoCompDegKZ}.

\

\noindent \textbf{Forni Subspace:} The \emph{Forni subspace} $F(x) \subset H^1(X,\mathbb{R})$ was formally defined in \cite{AvilaEskinMollerForniBundle}.  The subspace $F(x)$ is the maximal \splin -invariant subspace on which the KZ-cocycle acts by isometries with respect to the Hodge inner product.  It was proven in \cite[Thm. 1.3]{AvilaEskinMollerForniBundle} that for $\nu$-a.a. $x$, $p(T_{\mathbb{R}}(\mathcal{M}))(x)$ is orthogonal to $F(x)$ with respect to the Hodge inner product.

\

\noindent \textbf{Rank One Locus:} Let $X$ have genus $g$.  Let $\{\theta_1, \ldots, \theta_g\}$ be a basis of Abelian differentials on $X$.  Define the $ij$-component of the \emph{derivative of the period matrix} at $X$ in direction $\mu_{\omega}$ by the Ahlfors-Rauch variational formula
$$d\Pi(X, \omega) := \left(\frac{d\Pi}{\mu_{\omega}}\right)_{ij} = \int_X \theta_i\theta_j\frac{\overline{\omega}}{\omega}.$$
The \emph{rank one locus} is the set
$$\mathcal{D}_g(1) = \{(X,\omega) | \text{Rank}(d\Pi/\mu_{\omega}) = 1\}.$$

\

The problem of studying Teichm\"uller discs with completely degenerate KZ-spectrum was addressed from the perspective of the rank one locus in \cite{ForniHand} and \cite{AulicinoCompDegKZ}.  In general, if the dimension of the Forni subspace is $f$, then the value $k$ of the rank $k$ locus is related to $f$ by the formula $f/2 + k = g$, e.g. the rank one locus corresponds to the Forni subspace having maximal dimension $2g-2$.  Note that the rank $k$ locus is a subset of the moduli space of Abelian differentials that is \emph{not} known to be \splin -invariant, whereas the Forni subspace is a subbundle of $H^1_{\mathbb{R}}$ that is \splin -invariant by definition.  

We reconcile these perspectives in the following lemma.  The equivalence between $(1)$ and $(2)$ follows by definition of the Forni subspace.  The equivalence between $(2)$ and $(3)$ follows by definition in one direction, and by observing that $\mathcal{D}_g(1)$ is a closed set in the other direction.  Alternatively, if the KZ-spectrum is completely degenerate, then the Forni-Kontsevich formula \cite[Cor. 5.3]{ForniDev} forces the Forni subspace to have maximal dimension.

\begin{lemma}
\label{MaxForniSp}
The following are equivalent:
\begin{itemize}
\item[(1)] An AIS $\mathcal{M}$ has completely degenerate KZ-spectrum
\item[(2)] The Forni subspace of $\mathcal{M}$ has maximal dimension, i.e. for a.e. $x \in \mathcal{M}$, $\dim_{\mathbb{R}}(F(x)) = 2g-2$
\item[(3)] $\mathcal{M} \subset \mathcal{D}_g(1)$.
\end{itemize}
\end{lemma}

\

\noindent \textbf{Field of Definition:} In \cite{WrightFieldofDef}, the field of definition of an AIS was introduced.  The \emph{field of definition}, $\textbf{k}(\mathcal{M})$ of an AIS $\mathcal{M}$ is the smallest subfield of $\mathbb{R}$ such that $\mathcal{M}$ can be defined in local period coordinates by linear equations in $\textbf{k}(\mathcal{M})$.  It was proven that this is well-defined for every AIS and has degree at most $g$ over $\mathbb{Q}$, \cite[Thm. 1.1]{WrightFieldofDef}.

\

\noindent \textbf{Optimal Torus Coverings:}  In \cite{MollerShimuraTeich}, it was shown that every Teichm\"uller curve with completely degenerate KZ-spectrum is arithmetic, i.e. it is generated by a square-tiled surface.  Square-tiled surfaces $(X,\omega)$ naturally cover the torus $(\mathbb{T}^2, dz)$.  Let $\pi_{opt}: X \rightarrow \mathbb{T}^2$ denote the torus covering of minimal degree, i.e. $\pi_{opt}$ does not admit an intermediate torus cover.  It was shown in \cite{MollerShimuraTeich} that for arithmetic Teichm\"uller curves with completely degenerate KZ-spectrum the degree of $\pi_{opt}$ depends only on the stratum in which the Teichm\"uller curve lies.  Moreover, \cite{MollerShimuraTeich} calculated this degree $d_{opt}$ explicitly.

\

\noindent \textbf{The Eierlegende Wollmilchsau and the Ornithorynque:} We define the surfaces that generate the two known examples of Teichm\"uller curves with completely degenerate KZ-spectrum by depicting them as square-tiled surfaces in Figures \ref{EWFig} and \ref{OrniFig}.

\begin{figure}[htb]
\centering
\begin{minipage}{0.4\linewidth}
\centering
\begin{tikzpicture}[scale=0.50]
\draw (0,0) -- (0,2) -- (6,2)-- (6,4) -- (14,4) -- (14,2) -- (8,2) -- (8,0) -- cycle;
\draw (2,0) -- (2,2);
\draw (4,0) -- (4,2);
\draw (6,0) -- (6,2);
\draw (6,2) -- (8,2);
\draw (8,2) -- (8,4);
\draw (10,2) -- (10,4);
\draw (12,2) -- (12,4);
\foreach \x in {(0,0),(0,2),(2,0),(2,2),(4,0),(4,2),(6,0),(6,2),(6,4),(8,0),(8,2),(8,4),(10,2),(10,4),(12,2),(12,4),(14,2),(14,4)} \filldraw[fill=black] \x circle (3pt);
\draw (1,2) node[above] {\tiny A};
\draw (3,2) node[above] {\tiny B};
\draw (5,2) node[above] {\tiny C};
\draw (7,4) node[above] {\tiny E};
\draw (9,4) node[above] {\tiny D};
\draw (11,4) node[above] {\tiny G};
\draw (13,4) node[above] {\tiny F};
\draw (1,0) node[below] {\tiny D};
\draw (3,0) node[below] {\tiny E};
\draw (5,0) node[below] {\tiny F};
\draw (7,0) node[below] {\tiny G};
\draw (9,2) node[below] {\tiny C};
\draw (11,2) node[below] {\tiny B};
\draw (13,2) node[below] {\tiny A};
\end{tikzpicture}
\end{minipage}
\caption{The Eierlegende Wollmilchsau}
\label{EWFig}
\end{figure}
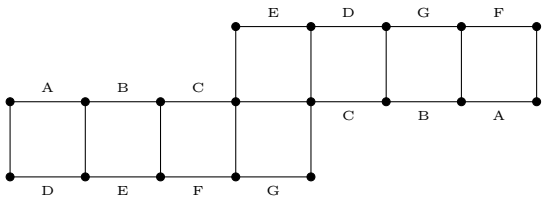

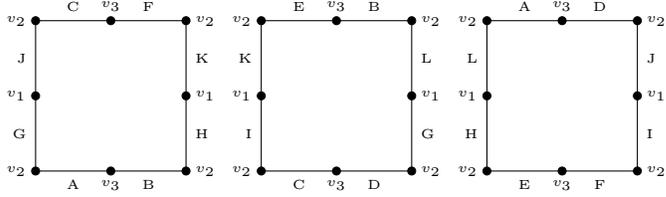
\begin{figure}[htb]
\centering
\begin{minipage}{0.4\linewidth}
\centering
\begin{tikzpicture}[scale=0.50]
\draw (-8,0) -- (-8,4) -- (-4,4) -- (-4,0) -- cycle;
\draw (-2,0) -- (-2,4) -- (2,4) -- (2,0) -- cycle;
\draw (4,0) -- (4,4) -- (8,4) -- (8,0) -- cycle;
\foreach \x in {(-8,0),(-8,2),(-8,4),(-6,0),(-6,4),(-4,4),(-4,2),(-4,0)} \filldraw[fill=black] \x circle (3pt);
\foreach \x in {(-2,0),(-2,2),(-2,4),(0,0),(0,4),(2,4),(2,2),(2,0)} \filldraw[fill=black] \x circle (3pt);
\foreach \x in {(4,0),(4,2),(4,4),(6,0),(6,4),(8,4),(8,2),(8,0)} \filldraw[fill=black] \x circle (3pt);
\draw (-8,0) node[left] {\tiny $v_2$};
\draw (-8,2) node[left] {\tiny $v_1$};
\draw (-8,4) node[left] {\tiny $v_2$};
\draw (-6,0) node[below] {\tiny $v_3$};
\draw (-6,4) node[above] {\tiny $v_3$};
\draw (-4,0) node[right] {\tiny $v_2$};
\draw (-4,2) node[right] {\tiny $v_1$};
\draw (-4,4) node[right] {\tiny $v_2$};

\draw (-2,0) node[left] {\tiny $v_2$};
\draw (-2,2) node[left] {\tiny $v_1$};
\draw (-2,4) node[left] {\tiny $v_2$};
\draw (0,0) node[below] {\tiny $v_3$};
\draw (0,4) node[above] {\tiny $v_3$};
\draw (2,0) node[right] {\tiny $v_2$};
\draw (2,2) node[right] {\tiny $v_1$};
\draw (2,4) node[right] {\tiny $v_2$};

\draw (4,0) node[left] {\tiny $v_2$};
\draw (4,2) node[left] {\tiny $v_1$};
\draw (4,4) node[left] {\tiny $v_2$};
\draw (6,0) node[below] {\tiny $v_3$};
\draw (6,4) node[above] {\tiny $v_3$};
\draw (8,0) node[right] {\tiny $v_2$};
\draw (8,2) node[right] {\tiny $v_1$};
\draw (8,4) node[right] {\tiny $v_2$};

\draw (-7,4) node[above] {\tiny C};
\draw (-5,4) node[above] {\tiny F};
\draw (-7,0) node[below] {\tiny A};
\draw (-5,0) node[below] {\tiny B};
\draw (-8,1) node[left] {\tiny G};
\draw (-8,3) node[left] {\tiny J};
\draw (-4,1) node[right] {\tiny H};
\draw (-4,3) node[right] {\tiny K};

\draw (-1,4) node[above] {\tiny E};
\draw (1,4) node[above] {\tiny B};
\draw (-1,0) node[below] {\tiny C};
\draw (1,0) node[below] {\tiny D};
\draw (-2,1) node[left] {\tiny I};
\draw (-2,3) node[left] {\tiny K};
\draw (2,1) node[right] {\tiny G};
\draw (2,3) node[right] {\tiny L};

\draw (5,4) node[above] {\tiny A};
\draw (7,4) node[above] {\tiny D};
\draw (5,0) node[below] {\tiny E};
\draw (7,0) node[below] {\tiny F};
\draw (4,1) node[left] {\tiny H};
\draw (4,3) node[left] {\tiny L};
\draw (8,1) node[right] {\tiny I};
\draw (8,3) node[right] {\tiny J};

\end{tikzpicture}
\end{minipage}
\caption{The Ornithorynque}
\label{OrniFig}
\end{figure}

\subsection{Cylinder Configuration}
\label{CylConfigHomBasis}

In this section we recall a result from \cite{AulicinoCompDegKZ}, which describes the cylinder decomposition of a surface whose AIS has completely degenerate KZ-spectrum.  Following the work of \cite{WrightCylDef}, both \cite[Thm. 5.5]{AulicinoCompDegKZ} and Corollary \ref{RankOneCylConfig} below can be proven independently of \cite{AulicinoCompDegKZ}.  (See the remark below for a complete alternate proof.)  Then we choose a convenient homology basis on this surface that will be used in Lemma \ref{RatFieldDef} below.  The claims of this section are summarized in Figure \ref{HomBasis}.

\begin{definition}
Given $(X,\omega)$, let $\mathcal{F}_{\theta}$ denote the vertical foliation of $(X,e^{i\theta}\omega)$.  If, for all $\theta \in \mathbb{R}$, the existence of a closed regular trajectory in $\mathcal{F}_{\theta}$ implies that every trajectory in $\mathcal{F}_{\theta}$ is closed, then $(X,\omega)$ is \emph{completely periodic}.
\end{definition}

\begin{remark}
This definition of ``completely periodic'' is consistent with that of \cite{AulicinoCompDegKZ} and \cite{CaltaVeechSurfsCompPerGen2}, but different from that of \cite{SmillieWeissMinSets}.
\end{remark}

It is proven in \cite[Thm. 5.5]{AulicinoCompDegKZ}, that any $(X,\omega)$ generating an AIS with completely degenerate KZ-spectrum, is completely periodic, though it was not phrased with this terminology because it was not available at the time.  The cylinder configuration described in Corollary \ref{RankOneCylConfig} is depicted in Figure \ref{HomBasis}.  In the specific case of Veech surfaces, this corollary is \cite[Lem. 5.3]{MollerShimuraTeich}.

\

\noindent \textbf{Cylinder Index Notation:}  Throughout this paper the indices on the cylinders are taken modulo $r$ so that we implicitly assume $C_{r+1} = C_1$.

\begin{corollary}[\cite{AulicinoCompDegKZ}]
\label{RankOneCylConfig}
Let $(X,\omega)$ generate an AIS with completely degenerate KZ-spectrum.  For each $\theta \in \mathbb{R}$ such that the vertical foliation of $(X, e^{i\theta}\omega)$ is periodic, $(X, e^{i\theta}\omega)$ decomposes into a union of cylinders $C_1, \ldots, C_r$ such that each saddle connection on the top of $C_i$ is identified to a saddle connection on the bottom of $C_{i+1}$ and vice versa, for all $1 \leq i \leq r$.  Furthermore, the circumference of $C_i$ equals the circumference of $C_j$, for all $i,j$.
\end{corollary}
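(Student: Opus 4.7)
The plan is to combine complete periodicity of $(X,\omega)$ with the rank-one constraint forced by the maximal Forni subspace, using Wright's cylinder deformation theorem \cite{WrightFieldofDef} as the bridge.

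First, I would fix $\theta$ so that the vertical foliation of $(X,e^{i\theta}\omega)$ admits a closed regular trajectory. By \cite[Thm.~5.5]{AulicinoCompDegKZ} (applicable via Lemma~\ref{MaxForniSp}), $(X,e^{i\theta}\omega)$ is completely periodic, so it decomposes into finitely many horizontal cylinders $C_1,\dots,C_r$; let $\gamma_i$ denote the core curve of $C_i$, oriented so that $\int_{\gamma_i}e^{i\theta}\omega>0$. Moreover, Lemma~\ref{MaxForniSp} together with \cite[Thm.~1.3]{AvilaEskinMollerForniBundle} shows that $p(T_{\mathbb{R}}(\mathcal{M}))$ lies in the two-dimensional Hodge-orthogonal complement of $F(x)$; since $p(T_{\mathbb{R}}(\mathcal{M}))$ already contains the two linearly independent classes $\mathrm{Re}(\omega)$ and $\mathrm{Im}(\omega)$ coming from the \splin-action, it equals this complement and has real dimension exactly two.

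Second, I would apply the cylinder deformation theorem of \cite{WrightFieldofDef}. The two-dimensionality of $p(T_{\mathbb{R}}(\mathcal{M}))$ forces all horizontal cylinders $C_1,\dots,C_r$ to be mutually $\mathcal{M}$-parallel, so the Poincar\'e duals $[\gamma_i]^* \in H^1(X,\mathbb{R})$ are all proportional to a single class, and consequently the homology classes $[\gamma_i] \in H_1(X,\mathbb{R})$ are pairwise proportional. I would then form the directed gluing graph $G$ whose vertex set is $\{C_1,\dots,C_r\}$ and whose edges record, for each saddle connection on the top of $C_i$, the cylinder $C_j$ whose bottom it is identified with. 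Using the proportionality of the $[\gamma_i]$, the intersection form on $H_1(X,\mathbb{Z})$, and the connectedness of $X$, one concludes that $G$ must be a single directed cycle of length $r$. The equal-circumference statement then follows automatically: the total top-length of $C_i$ equals the total bottom-length of $C_{i+1}$, so the circumferences of consecutive cylinders agree, and induction yields the general case.

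The main obstacle is the combinatorial step extracting the cyclic structure of $G$ from the cohomological proportionality of the core curves. In the Veech-surface case, \cite[Lem.~5.3]{MollerShimuraTeich} handles this by exploiting the Veech group to cyclically permute cylinders. The generalization to arbitrary AIS is the content of \cite{AulicinoCompDegKZ} and relies on replacing the Veech-group symmetry with the $\mathcal{M}$-equivalence of cylinders furnished by \cite{WrightFieldofDef}; the present corollary is then a direct rephrasing of the relevant output of \cite{AulicinoCompDegKZ} in the terminology of complete periodicity.
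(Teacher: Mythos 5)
There is a genuine gap in the second step. Wright's cylinder deformation machinery shows that two $\mathcal{M}$-parallel cylinders have core-curve holonomy functionals that are proportional \emph{as linear functionals restricted to $T(\mathcal{M})$}; it does not make the Poincar\'e duals $[\gamma_i]^*$ proportional in $H^1(X,\mathbb{R})$, nor the classes $[\gamma_i]$ proportional in $H_1(X,\mathbb{R})$. Conflating these is fatal, and one can see the whole strategy cannot work as stated because it only uses $\dim p(T_{\mathbb{R}}(\mathcal{M}))=2$ (rank one) together with complete periodicity: a Teichm\"uller curve in $\mathcal{H}(2)$ (e.g.\ the three-square surface) satisfies both of these, all of its parallel cylinders are $\mathcal{M}$-parallel, and yet it has a two-cylinder periodic direction with circumferences $1$ and $2$ and with the top of one cylinder glued to the bottoms of both cylinders --- exactly the configuration the corollary excludes. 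So the conclusion genuinely requires the full strength of the completely degenerate hypothesis (maximal Forni subspace, equivalently rank one of the derivative of the period matrix via Lemma \ref{MaxForniSp} and \cite[Cor.~7.1]{ForniHand}), not merely $\dim p(T_{\mathbb{R}}(\mathcal{M}))=2$. Moreover, the step you flag as "the main obstacle" --- extracting the cyclic gluing pattern --- is precisely the content of the statement being proved, and deferring it to \cite{AulicinoCompDegKZ} makes the argument circular, since the corollary \emph{is} the relevant output of that paper.

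For comparison, the argument used in the paper (following \cite{AulicinoCompDegKZ}, and \cite[Lem.~5.3]{MollerShimuraTeich} in the Veech case) is a degeneration argument: in the chosen periodic direction one pinches the core curves of all cylinders simultaneously; the requirement that the derivative of the period matrix have rank one forces the connected components of the resulting degenerate surface to be arranged in a single cycle, which yields the top-of-$C_i$-to-bottom-of-$C_{i+1}$ identification scheme, and the orientability of a foliation transverse to the vertical one rules out two saddle connections on the same top (or same bottom) being identified to each other. The equal-circumference claim does then follow as you say, since the total top length of $C_i$ equals the total bottom length of $C_{i+1}$. Your first paragraph (complete periodicity via \cite[Thm.~5.5]{AulicinoCompDegKZ} and $\dim p(T_{\mathbb{R}}(\mathcal{M}))=2$ via \cite{AvilaEskinMollerForniBundle}) is fine; it is the passage from $\mathcal{M}$-parallelism to homological proportionality, and from there to the cyclic structure, that must be replaced by the rank-one-of-the-period-matrix degeneration argument.
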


The proof of Corollary \ref{RankOneCylConfig} follows from considering any periodic direction, which exists because $(X,\omega)$ is completely periodic, and degenerating the surface by pinching the core curves of every cylinder in that direction.  The configuration of the connected components of the resulting degenerate surface must be arranged in a cycle in order for the derivative of the period matrix to have rank one.  This yields the identification scheme in Corollary \ref{RankOneCylConfig}.  Furthermore, no two saddle connections on the top of $C_i$ (or the bottom of $C_{i+1}$) can be identified to each other without contradicting the orientability of a foliation transverse to the vertical foliation of $\omega$.

\begin{remark}
Alternatively, there is a simple proof of \cite[Thm. 5.5]{AulicinoCompDegKZ}.  By \cite{AvilaEskinMollerForniBundle}, $p(T(\mathcal{M}))$ is orthogonal to the Forni subspace, which implies that $p(T(\mathcal{M}))$ is $2$-dimensional.  By \cite[Thm. 1.5]{WrightCylDef}, every translation surface in $\mathcal{M}$ is completely periodic.

By the Forni Criterion \cite{ForniCriterion}, the number of positive Lyapunov exponents is bounded below by the dimension of the core curves of the cylinders in homology.  If there is exactly one positive Lyapunov exponent, then the Forni Criterion implies that in each periodic direction every cylinder is homologous to every other cylinder.  Hence, Corollary \ref{RankOneCylConfig} follows.
\end{remark}

Finally, we introduce a basis $\mathcal{B}(X,\omega) \subset H_1(X,\mathbb{Z})$, which is partially depicted in Figure \ref{HomBasis}, for the homology space of $X$.  This basis is chosen in a way that is dependent on $\omega$, and will be abbreviated $\mathcal{B}$ when $(X,\omega)$ is understood.  The cycle $a_1$ represents the family of core curves of the cylinders, which are pairwise homologous by Corollary \ref{RankOneCylConfig}.  Also by Corollary \ref{RankOneCylConfig}, for each $j$, with $2 \leq j \leq g$, it suffices to let each cycle $a_j$ be a curve lying entirely in a small neighborhood of the boundaries of two adjacent cylinders defined so that $a_i \cap a_j = \emptyset$, for $i \not= j$.  To see that such a choice is possible, cut each cylinder along its core curve thereby separating $(X,\omega)$ and collapse each core curve to a point to get a collection of positive genus surfaces $(M_i,\eta_i)$, $1 \leq i \leq r$.  Each surface $(M_i,\eta_i)$ admits a homology basis with $a$-cycles that are pairwise disjoint.  Since the two discs (formerly half-cylinders) forming $(M_i,\eta_i)$ are homologically trivial, the $a$-cycles must lie in a small neighborhood of the boundaries of the discs.  Hence, they lie in a small neighborhood of the boundaries of the cylinders on $(X,\omega)$.

The choice of $b$-cycles will not matter except for the cycle $b_1$, which, by necessity, must traverse the height of every cylinder once in order to intersect the cycle $a_1$ exactly once.  Define $\mathcal{B}(X,\omega) = \{a_j, b_j | 1 \leq j \leq g\}$.

\begin{figure}
 \centering
 \includegraphics[width=80mm]{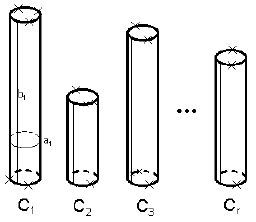}
 \caption{Homology Basis: Saddle connections on the top of $C_i$ are identified with saddle connections on the bottom of $C_{i+1}$.  Crosses denote copies of zeros.}
 \label{HomBasis}
\end{figure}

\section{Field of Definition}

\begin{lemma}
\label{RatFieldDef}
Let $\mathcal{M}$ be an AIS with completely degenerate KZ-spectrum.  Then $\textbf{k}(\mathcal{M}) = \mathbb{Q}$.
\end{lemma}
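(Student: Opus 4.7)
The plan is to bound the dimension of the absolute part of the tangent space of $\mathcal{M}$ from above using the Forni subspace, and then invoke Wright's theorem on the field of definition to force the degree $[\mathbf{k}(\mathcal{M}) : \mathbb{Q}]$ to be one.

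First I would apply Lemma \ref{MaxForniSp} to conclude that, for $\nu$-a.e. $x \in \mathcal{M}$, the Forni subspace $F(x) \subset H^1(X,\mathbb{R})$ has dimension $2g-2$. By the result of Avila, Eskin, and M\"oller \cite[Thm. 1.3]{AvilaEskinMollerForniBundle} quoted in the preliminaries, $p(T_{\mathbb{R}}(\mathcal{M}))(x)$ is Hodge-orthogonal to $F(x)$. Since $H^1(X,\mathbb{R})$ has real dimension $2g$, this gives
$$\dim_{\mathbb{R}} p(T_{\mathbb{R}}(\mathcal{M}))(x) \leq 2g - \dim_{\mathbb{R}} F(x) = 2.$$
On the other hand, $\mathcal{M}$ is \splin-invariant, so $T_{\mathbb{R}}(\mathcal{M})$ contains the tangent directions $\mathrm{Re}(\omega)$ and $\mathrm{Im}(\omega)$, whose projections to absolute cohomology are $\mathbb{R}$-linearly independent. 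Thus $\dim_{\mathbb{R}} p(T_{\mathbb{R}}(\mathcal{M}))(x) \geq 2$, and combining the two bounds yields $\dim_{\mathbb{R}} p(T_{\mathbb{R}}(\mathcal{M})) = 2$.

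Next I would invoke Wright's structure theorem from \cite{WrightFieldofDef}. That theorem states that if $n = [\mathbf{k}(\mathcal{M}) : \mathbb{Q}]$ with real embeddings $\sigma_1, \ldots, \sigma_n$, then $p(T_{\mathbb{R}}(\mathcal{M}))$ decomposes as a direct sum of $n$ Galois-conjugate subspaces $V^{\sigma_1} \oplus \cdots \oplus V^{\sigma_n}$, each of real dimension at least $2$ (one of them being the tautological plane spanned by $\mathrm{Re}(\omega),\mathrm{Im}(\omega)$). Consequently
$$2 = \dim_{\mathbb{R}} p(T_{\mathbb{R}}(\mathcal{M})) \geq 2 \cdot [\mathbf{k}(\mathcal{M}) : \mathbb{Q}],$$
forcing $[\mathbf{k}(\mathcal{M}) : \mathbb{Q}] = 1$ and hence $\mathbf{k}(\mathcal{M}) = \mathbb{Q}$.

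The main obstacle, I expect, is not in the logical skeleton above (which is short and direct once the ingredients are gathered) but in citing Wright's theorem in the precise form needed: one wants the statement that each Galois-conjugate summand in the decomposition of $p(T_{\mathbb{R}}(\mathcal{M}))$ has real dimension at least $2$, so that the degree of the field of definition is bounded by $\tfrac{1}{2}\dim_{\mathbb{R}} p(T_{\mathbb{R}}(\mathcal{M}))$. Given this ingredient, the argument is essentially immediate from the maximality of the Forni subspace established in Lemma \ref{MaxForniSp}. The cylinder-configuration material of Section \ref{CylConfigHomBasis} and the homology basis $\mathcal{B}(X,\omega)$ are presumably not needed for this lemma itself (Figure \ref{HomBasis} appears to be set up for later use), so I would keep the proof short and limit it to the three-step argument above.
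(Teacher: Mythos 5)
There is a genuine gap, and it lies exactly where you flagged your uncertainty: the form of Wright's theorem you invoke does not exist, and the statement you would need is false. Wright's decomposition \cite[Thm. 1.4]{WrightFieldofDef} is a decomposition of the \emph{absolute cohomology bundle} $H^1 = (\oplus_{\rho} \mathbb{V}_{\rho}) \oplus \mathbb{W}$, in which $\mathbb{V}_{\text{Id}} = p(T_{\mathbb{R}}(\mathcal{M}))$ and the other Galois-conjugate summands $\mathbb{V}_{\rho}$ sit elsewhere in $H^1(X,\mathbb{R})$ --- they are \emph{not} contained in $p(T_{\mathbb{R}}(\mathcal{M}))$. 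So from $\dim_{\mathbb{R}} p(T_{\mathbb{R}}(\mathcal{M})) = 2$ (which you do establish correctly via Lemma \ref{MaxForniSp} and \cite[Thm. 1.3]{AvilaEskinMollerForniBundle}, and which the paper also uses) the dimension count only gives $2g \geq 2\,[\mathbf{k}(\mathcal{M}):\mathbb{Q}]$, i.e. the bound $[\mathbf{k}(\mathcal{M}):\mathbb{Q}] \leq g$ already contained in \cite[Thm. 1.1]{WrightFieldofDef}, not $[\mathbf{k}(\mathcal{M}):\mathbb{Q}] = 1$. Moreover the implication you are really asserting --- $\dim_{\mathbb{R}} p(T_{\mathbb{R}}(\mathcal{M})) = 2$ forces $\mathbf{k}(\mathcal{M}) = \mathbb{Q}$ --- is false in general: every non-arithmetic Teichm\"uller curve (e.g. the one generated by the regular octagon, with field of definition $\mathbb{Q}(\sqrt{2})$) satisfies $\dim_{\mathbb{R}} p(T_{\mathbb{R}}(\mathcal{M})) = 2$. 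Your argument never uses complete degeneracy beyond this dimension count, so it would ``prove'' too much.

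What the paper actually exploits, and what your proposal is missing, is the interplay between a unipotent monodromy element and the isometric action on the Forni subspace. Using complete periodicity (Corollary \ref{RankOneCylConfig}) and the Smillie--Weiss description of horocycle orbit closures, the paper constructs a closed loop $\gamma$ in $\mathcal{M}$ realized by cylinder twists, and computes in the homology basis $\mathcal{B}$ of Section \ref{CylConfigHomBasis} that the monodromy $A$ along $\gamma$ is a nontrivial unipotent (so the cylinder-configuration material is needed after all). Since the KZ-cocycle acts by isometries on the maximal Forni subspace, $A$ has a block in $\text{SO}(2g-2)$ on $F$, and its $2 \times 2$ unipotent Jordan block must lie in $\mathbb{V}_{\text{Id}} = p(T_{\mathbb{R}}(\mathcal{M}))$ because $\text{SO}(n)$ contains no nontrivial unipotents. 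A nontrivial Galois conjugate of $\mathbb{V}_{\text{Id}}$ would carry a Galois conjugate of that block --- still unipotent --- inside the compact $\text{SO}(2g-2)$ part, a contradiction; hence $\mathbf{k}(\mathcal{M}) = \mathbb{Q}$. To repair your proof you would need to supply this (or some comparable) source of a nontrivial unipotent in the monodromy together with the rigidity of the Forni block; the dimension bound alone cannot do it.
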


\begin{proof}
Let $(X,\omega) \in \mathcal{M}$.  Then $(X,\omega)$ is completely periodic by \cite[Thm. 5.5]{AulicinoCompDegKZ}.  Without loss of generality, assume the horizontal foliation of $(X,\omega)$ is periodic.  Furthermore, any periodic foliation can be written as a union of $r$ cylinders of unit circumference, where $r$ depends on the foliation, and the cylinders are arranged as in Corollary \ref{RankOneCylConfig}, see Figure \ref{HomBasis}.  We choose a specific closed loop in $\mathcal{M}$ that will simplify our calculations of the monodromy matrix below.

Consider the orbit of $(X,\omega)$ under the horocycle flow.  By \cite[Prop. 4(2)]{SmillieWeissMinSets}, the closure of this orbit is isomorphic to a $d$-dimensional torus, $\mathbb{T}^d$, for some $d$.  If we consider an $\varepsilon > 0$ ball $B$ about $(X,\omega)$ in $\mathbb{T}^d \subset \mathcal{M}$, then there is a sufficiently large value of $T >> 1$ such that $h_T \cdot (X,\omega) := (X_1, \omega_1) \in B$ and the following property holds.  Since we reach $(X_1, \omega_1)$ from $(X,\omega)$ by the horocycle flow, both $(X,\omega)$ and $(X_1,\omega_1)$ admit cylinder decompositions that differ only by the twisting of the cylinders themselves.  Therefore, it is possible to consider the continuous path $\gamma': [0,1] \rightarrow B$, from $(X_1, \omega_1)$ to $(X,\omega)$ lying entirely in $\mathbb{T}^d$ because $\mathbb{T}^d$ is a closed manifold.  We assumed $T$ to be sufficiently large so that following the path $\gamma'$ will not undo the twists induced by the horocycle flow.  Let $\gamma$ denote the path along the trajectory of the horocycle flow followed by concatenation with $\gamma'$.

Next we compute the monodromy matrix, denoted by $A$, of $[\gamma] \in \pi_1(\mathcal{M})$.  To do this, we pass freely between cohomology and homology by Poincar\'e duality.  Fix the homology basis $\mathcal{B}$ defined in Section \ref{CylConfigHomBasis} and compute the monodromy matrix induced by following $\gamma$.  For each of the $r$ cylinders, traveling along $\gamma$ results in $C_k$ twisting $\rho_k \geq 0$ times, with $\rho_k \in \mathbb{Z}$.  It is crucial to note that the assumption above on $T$ being sufficiently large guarantees that there exists a value of $k$ such that $\rho_k > 0$.  Traveling along $\gamma$ adds an $a_1$ cycle to every homology cycle traversing the height of every cylinder, while every other cycle is fixed.

By our choice of $a$-cycles, all of the $a$-cycles are fixed along the path $\gamma$, so that block of the monodromy matrix is the identity and the off-diagonal block that records the addition of $b$-cycles to $a$-cycles is the zero matrix.  Furthermore, the twisting does not combine any pair of $b$-cycles and so that block of the monodromy matrix is also the identity.  This follows because no linear combination of $b$-cycles is equal to $a_1$.  Therefore the monodromy matrix $A$ corresponding to the path $[\gamma]$ is given as follows: let the first $g$ rows and columns of $A$ correspond to the $a$-cycles and the second set of $g$ rows and columns of $A$ correspond to the $b$-cycles.  Then,
$$A = \left[ \begin{array}{cc}
\text{Id} & 0\\
M & \text{Id} \end{array} \right],$$
where each block of $A$ is a $g \times g$ matrix, and $M$ will be determined below.

We claim that $M \not= 0$, which will imply that $A$ is unipotent and $A \not= \text{Id}$.  It suffices to find a single entry in $M$ that is non-zero.  Recall that the cycle $b_1 \in \mathcal{B}$ was chosen so that it traversed the height of every cylinder.  Let $\rho = \sum_k \rho_k$ and recall that $\rho > 0$.  Therefore, following the path $\gamma$ will add $\rho$ copies of $a_1$ to $b_1$, which implies that the $1,1$ entry of $M$ is $\rho \not= 0$.

Since $A$ is unipotent, it generates a non-compact subgroup of matrices in $\text{SL}_{2g}(\mathbb{Z})$.  If we consider the Jordan form of $A$, then there is at least one Jordan block of dimension at least $2 \times 2$ that is unipotent because matrix conjugation preserves unipotence.  By \cite[Thm. 1.3]{AvilaEskinMollerForniBundle} and Lemma \ref{MaxForniSp}, if $\nu$ is a finite affine measure on $\mathcal{M}$, then for $\nu$-a.a. $x \in \mathcal{M}$, $H^1(X, \mathbb{R}) = p(T_{\mathbb{R}}(\mathcal{M}))(x) \oplus F(x)$.  By the definition of $F(x)$, every monodromy matrix of $\mathcal{M}$ must decompose into blocks such that there is a block of dimension $2g-2 \times 2g-2$ lying in $\text{SO}(2g-2)$.  This implies that $A$ is conjugate to a matrix whose Jordan decomposition consists of a $2 \times 2$ unipotent matrix and a $2g-2 \times 2g-2$ diagonal matrix with unit eigenvalues.

By \cite[Thm. 1.4]{WrightFieldofDef}, we have a decomposition of $H^1$ into subbundles $(\oplus_{\rho} \mathbb{V}_{\rho}) \oplus \mathbb{W}$, where the direct sum of $\mathbb{V}_{\rho}$ is taken over all Galois conjugates $\rho$ of $\textbf{k}(\mathcal{M})$.  This induces a decomposition of $A$ into blocks corresponding to each of these subbundles.  By \cite[Thm. 1.4]{WrightFieldofDef}, $\mathbb{V}_{\text{Id}} = p(T_{\mathbb{R}}(\mathcal{M}))$.  Also, by \cite[Thm. 1.4]{WrightFieldofDef} and the fact that $\text{SO}(n)$ does not contain any unipotent elements, the block corresponding to the subbundle $\mathbb{V}_{\text{Id}}$ must be the $2 \times 2$ unipotent block.  Furthermore, Galois conjugates of a unipotent block are unipotent.  However, any non-trivial Galois conjugate of $\mathbb{V}_{\text{Id}}$ would have to send the $2 \times 2$ unipotent block into the block contained in the compact group $\text{SO}(2g-2)$.  Such a contradiction implies that there can be no nontrivial Galois conjugates of $\textbf{k}(\mathcal{M})$.  Thus, $\mathbf{k}(\mathcal{M}) = \mathbb{Q}$.
\end{proof}

\begin{corollary}
\label{DenseSubset}
If $\mathcal{M}$ is an AIS with completely degenerate KZ-spectrum, then it contains a dense subset of arithmetic Teichm\"uller curves.
\end{corollary}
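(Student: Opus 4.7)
The plan is to use Lemma \ref{RatFieldDef} to exhibit a dense set of square-tiled surfaces in $\mathcal{M}$, each of which generates an arithmetic Teichm\"uller curve contained in $\mathcal{M}$. First I would fix a basepoint $(X,\omega) \in \mathcal{M}$ together with an integral basis $\{\gamma_1, \ldots, \gamma_k\}$ of $H_1(X,\Sigma,\mathbb{Z})$, so that the period map $\Phi$ identifies a neighborhood $U$ of $(X,\omega)$ in $\mathcal{M}$ with an open subset of a complex-linear subspace $V \subset H^1(X,\Sigma,\mathbb{C}) \cong \mathbb{C}^k$. By Lemma \ref{RatFieldDef} and the definition of $\textbf{k}(\mathcal{M})$ from \cite{WrightFieldofDef}, the subspace $V$ is the solution set of a system of $\mathbb{C}$-linear equations whose coefficient matrix has entries in $\mathbb{Q}$. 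A standard linear-algebra observation then implies that $V \cap \mathbb{Q}(i)^k$ is dense in $V$: a $\mathbb{Q}$-basis of the rational solution space is simultaneously a $\mathbb{C}$-basis of $V$, and $\mathbb{Q}(i)$-linear combinations of such a basis are dense in $V$. Intersecting with $U$ yields a dense subset of $U$ consisting of surfaces $(X',\omega')$ all of whose periods lie in $\mathbb{Q}(i)$.

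Next, any such $(X',\omega')$ becomes square-tiled after rescaling by a diagonal element of \splin: clearing denominators puts the periods into $\mathbb{Z} + i\mathbb{Z}$, so $\omega'$ is the pullback of $dz$ under a branched cover $X' \to \mathbb{T}^2$. Each such $(X',\omega')$ generates an arithmetic Teichm\"uller curve, and because $\mathcal{M}$ is closed and \splin-invariant, this entire Teichm\"uller curve lies in $\mathcal{M}$. The union of these arithmetic Teichm\"uller curves therefore contains the dense subset of $U$ constructed above, and covering $\mathcal{M}$ by countably many such period-coordinate charts promotes density in each chart to density in all of $\mathcal{M}$.

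I do not expect a substantive obstacle here: the argument is essentially formal once Lemma \ref{RatFieldDef} is available. The only delicate point to unpack is that ``defined over $\mathbb{Q}$ in period coordinates'' in the sense of \cite{WrightFieldofDef} is compatible with period charts arising from an \emph{integral} homology basis of $H_1(X,\Sigma,\mathbb{Z})$, after which everything reduces to the elementary density statement above together with the classical characterization of square-tiled surfaces as those with periods in $\mathbb{Q}(i)$.
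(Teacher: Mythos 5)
Your proposal is correct and follows essentially the same route as the paper: Lemma \ref{RatFieldDef} gives $\textbf{k}(\mathcal{M})=\mathbb{Q}$, so in period coordinates $\mathcal{M}$ is locally cut out by linear equations with rational coefficients, hence points with periods in $\mathbb{Q}(i)$ are dense; after clearing denominators these are square-tiled, and the arithmetic Teichm\"uller curves they generate lie in $\mathcal{M}$ and are dense. Your write-up simply spells out the linear-algebra density step and the rescaling more explicitly than the paper does.
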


\begin{proof}
By Lemma \ref{RatFieldDef}, $\textbf{k}(\mathcal{M}) = \mathbb{Q}$.  By the definition of $\textbf{k}(\mathcal{M})$, $\mathcal{M}$ can be written locally as the zero set of a set of linear equations with coefficients in $\mathbb{Q}$.  Hence, it admits a dense set of rational solutions.  Each rational solution corresponds to a square-tiled surface because the periods lie in $(1/q)\mathbb{Z}$, where $q \in \mathbb{Z}^+$ is the lowest common denominator of the rational solution.  Finally, the arithmetic Teichm\"uller curves must be dense because the square-tiled surfaces are dense.
\end{proof}

\section{The Map $\pi$}

In this section we use Lemma \ref{RatFieldDef} to prove that if $\cM$ is an affine manifold with completely degenerate KZ-spectrum, then its elements can be realized as torus covers.  Loosely speaking, this allows us to consider the degenerations that result from colliding branch points on the torus while fixing the torus itself.  The resulting degenerations will be very specific, and considering all of the combinatorial possibilities will allow us to prove the main theorem.

\subsection{$\cM$ as a Family of Torus Covers}

We begin with a technical lemma.  Let $\mathcal{M}$ be an AIS.  Given $(X,\omega) \in \mathcal{M}$, define the set
$$L(X,\omega) := \left\{\int_{\gamma_i}\omega \big\vert \gamma_i \in H_1(X,\mathbb{Z})\right\}.$$

\begin{lemma}
\label{LDiscreteGen}
Let $\mathcal{M}$ be an AIS with $\dim p(T_{\mathbb{R}}(\mathcal{M})) = 2$ and $\textbf{k}(\mathcal{M}) = \mathbb{Q}$.  If $(X,\omega) \in \mathcal{M}$, then $L(X,\omega)$ is a lattice in $\mathbb{C}$.
\end{lemma}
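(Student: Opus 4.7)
The plan is to use the rationality $\mathbf{k}(\mathcal{M})=\mathbb{Q}$ to realize $L(X,\omega)$ as the image, under an invertible $\mathbb{R}$-linear map, of a finitely generated rank-$2$ subgroup of $\mathbb{Q}^2$, and then to observe that any such subgroup is automatically a discrete lattice in $\mathbb{R}^2\cong\mathbb{C}$.

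The first step is to locate $[\mathrm{Re}\,\omega]$ and $[\mathrm{Im}\,\omega]$ inside $p(T_{\mathbb{R}}(\mathcal{M}))$. Since $\mathcal{M}$ is $\mathrm{SL}_2(\mathbb{R})$-invariant, the standard computation of tangent vectors to the Teichm\"uller and horocycle flows at $\omega$, combined with $T_{\mathbb{C}}(\mathcal{M})=\mathbb{C}\otimes T_{\mathbb{R}}(\mathcal{M})$, shows that both $[\mathrm{Re}\,\omega]$ and $[\mathrm{Im}\,\omega]$ lie in $T_{\mathbb{R}}(\mathcal{M})$, hence their projections lie in $p(T_{\mathbb{R}}(\mathcal{M}))$. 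The unit-area hypothesis gives $[\mathrm{Re}\,\omega]\wedge[\mathrm{Im}\,\omega]\neq 0$ in $H^2(X,\mathbb{R})$, so these two classes are $\mathbb{R}$-linearly independent, and therefore furnish an $\mathbb{R}$-basis of the $2$-dimensional space $p(T_{\mathbb{R}}(\mathcal{M}))$.

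The second step is to exploit $\mathbf{k}(\mathcal{M})=\mathbb{Q}$ to pick a $\mathbb{Q}$-basis $\alpha,\beta\in H^1(X,\mathbb{Q})$ of $p(T_{\mathbb{R}}(\mathcal{M}))$ and expand $[\mathrm{Re}\,\omega]=a\alpha+b\beta$, $[\mathrm{Im}\,\omega]=c\alpha+d\beta$ with $a,b,c,d\in\mathbb{R}$ and $ad-bc\neq 0$. Then for every $\gamma\in H_1(X,\mathbb{Z})$ one has
$$\int_{\gamma}\omega \;=\; T\bigl(\langle\gamma,\alpha\rangle,\,\langle\gamma,\beta\rangle\bigr),$$
where $T:\mathbb{R}^2\to\mathbb{C}$, $(x,y)\mapsto (ax+by)+i(cx+dy)$, is an invertible $\mathbb{R}$-linear map. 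Hence $L(X,\omega)=T(\Lambda)$ with $\Lambda:=\{(\langle\gamma,\alpha\rangle,\langle\gamma,\beta\rangle):\gamma\in H_1(X,\mathbb{Z})\}\subset\mathbb{Q}^2$.

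The final step, and the only real subtlety, is to show that $\Lambda$ is a lattice of full rank $2$ in $\mathbb{R}^2$. Discreteness is routine: $\Lambda$ is a finitely generated subgroup of $\mathbb{Q}^2$, so it sits inside $\tfrac{1}{N}\mathbb{Z}^2$ for some $N$. Rank $2$ is where both hypotheses interact: because $\alpha,\beta$ are $\mathbb{Q}$-linearly independent in $H^1(X,\mathbb{Q})$ and the intersection pairing $H_1(X,\mathbb{Q})\otimes H^1(X,\mathbb{Q})\to\mathbb{Q}$ is non-degenerate, the evaluation map $H_1(X,\mathbb{Q})\to\mathbb{Q}^2$ is surjective, so $\Lambda$ already spans $\mathbb{Q}^2$ over $\mathbb{Q}$. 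A rank-$2$ finitely generated subgroup of $\tfrac{1}{N}\mathbb{Z}^2$ is a lattice in $\mathbb{R}^2$, and applying the linear isomorphism $T$ transports this to a discrete lattice in $\mathbb{C}$. The main potential pitfall is that without the rank-$2$ check one could only conclude that $L(X,\omega)$ is a discrete subgroup of $\mathbb{C}$ (possibly contained in a line); the non-degeneracy of Poincar\'e duality, combined with the $\mathbb{Q}$-independence of $\alpha$ and $\beta$, is exactly what rules this out.
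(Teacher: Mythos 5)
Your proof is correct, and at its core it runs on the same two hypotheses in the same way the paper does: rationality of $p(T_{\mathbb{R}}(\mathcal{M}))$ plus $\dim p(T_{\mathbb{R}}(\mathcal{M}))=2$ force every absolute period to be determined by two rational ``coordinates,'' and clearing denominators gives discreteness. The packaging is dual, though: the paper works directly with the period vector, writing $p(T_{\mathbb{R}}(\mathcal{M}))=\ker(A')$ for a rational matrix $A'$ in reduced row echelon form and reading off that each $\int_{\gamma_i}\omega$ is a $\mathbb{Q}$-linear combination of $\int_{\gamma_1}\omega$ and $\int_{\gamma_2}\omega$, so that $pL\subset\mathbb{Z}\int_{\gamma_1}\omega+\mathbb{Z}\int_{\gamma_2}\omega$; you instead choose a rational basis $\alpha,\beta$ of $p(T_{\mathbb{R}}(\mathcal{M}))$, expand $[\mathrm{Re}\,\omega],[\mathrm{Im}\,\omega]$ in it, and realize $L$ as the image under an invertible real-linear map of the subgroup $\Lambda\subset\frac{1}{N}\mathbb{Z}^2$ of pairing values. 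What your version buys is the last step: you explicitly verify that $L$ has full rank (hence is genuinely a lattice and in particular discrete), using the unit-area condition to get $\mathbb{R}$-independence of $[\mathrm{Re}\,\omega]$ and $[\mathrm{Im}\,\omega]$ and the non-degeneracy of the pairing to get surjectivity of the evaluation map. The paper's proof stops at ``every element of $pL$ is an integral combination of two complex numbers, thus $pL$ is a lattice,'' which as stated only places $L$ in a finitely generated subgroup of $\mathbb{C}$ and leaves the $\mathbb{R}$-independence of the two generating periods (and hence discreteness and rank $2$) implicit; your argument closes that small gap, at the modest cost of invoking the standard fact that $[\mathrm{Re}\,\omega],[\mathrm{Im}\,\omega]\in p(T_{\mathbb{R}}(\mathcal{M}))$, which the paper uses only implicitly through $A'y=0$.
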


\begin{proof}
Since $\mathcal{M}$ is an AIS, locally identify $\mathcal{M}$ with its tangent space $T_{\mathbb{R}}(\mathcal{M})$.  Let $T_{\mathbb{R}}(\mathcal{M})$ have codimension $d$.  Since $T_{\mathbb{R}}(\mathcal{M})$ is a linear subspace in period coordinates, there is a matrix $A \in \text{Mat}_{d \times k}(\textbf{k}(\mathcal{M}))$ such that locally $\ker(A) = T_{\mathbb{R}}(\mathcal{M})$.  Note that the natural map $p: H^1(X, \Sigma, \mathbb{R}) \rightarrow H^1(X, \mathbb{R})$ is a linear transformation of vector spaces.  Hence, $p(\ker(A))$ is in fact a subspace of $H^1(X, \mathbb{R})$.  Therefore, there exists another matrix $A'$ such that $\ker(A') = p(T_{\mathbb{R}}(\mathcal{M}))$, and by assumption, $A' \in \text{Mat}_{2g-2 \times 2g}(\mathbb{Q})$.  Without loss of generality, let $A' := (a_{ij})$ be in reduced row echelon form.

Fix a basis $\{\gamma_1, \ldots, \gamma_k\}$ for $H_1(X,\Sigma, \mathbb{Z})$ so that $\{\gamma_1, \ldots, \gamma_{2g}\}$ forms a basis for the absolute homology $H_1(X,\mathbb{Z})$.  This induces a basis $\{\gamma_1^*, \ldots, \gamma_k^*\}$ on cohomology by Poincar\'e duality so that $\{\gamma_1^*, \ldots, \gamma_{2g}^*\}$ forms a basis for $H^1(X,\mathbb{Z})$.  If we consider period coordinates with respect to this basis, we get
$$\Phi(X,\omega) = \left(\int_{\gamma_1} \omega, \ldots, \int_{\gamma_k}\omega\right) \in \mathbb{C}^k.$$
Under the map $p$, we get
$$p\left(\int_{\gamma_1} \omega, \ldots, \int_{\gamma_k}\omega\right) = \left(\int_{\gamma_1} \omega, \ldots, \int_{\gamma_{2g}}\omega\right) := y.$$
Then $y$ satisfies $A'y = 0$.  Hence, for each $1 \leq i \leq 2g$,
$$\int_{\gamma_i} \omega = a_{i, 1} \int_{\gamma_1}\omega  + a_{i, 2} \int_{\gamma_2}\omega,$$
where $a_{ij} \in \mathbb{Q}$.  Let $p$ be a positive integer such that $p a_{ij} \in \mathbb{Z}$, for all $i,j$.  This implies that every element in $pL$ is an integral linear combination of two complex numbers, thus $pL$ is a lattice, as is $L$.
\end{proof}

By Lemma \ref{MaxForniSp}, if $\mathcal{M}$ is an AIS with completely degenerate KZ-spectrum, then $\dim p(T_{\mathbb{R}}(\mathcal{M})) = 2$, and Lemma \ref{RatFieldDef} implies $\textbf{k}(\mathcal{M}) = \mathbb{Q}$.  This yields 

\begin{corollary}
\label{LDiscrete}
Let $\mathcal{M}$ be an AIS with completely degenerate KZ-spectrum.  If $(X,\omega) \in \mathcal{M}$, then $L(X,\omega)$ is a lattice in $\mathbb{C}$.
\end{corollary}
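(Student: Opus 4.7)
The plan is to deduce the corollary directly by verifying the two hypotheses of Lemma \ref{LDiscreteGen} and then invoking it. The two required hypotheses are $\dim p(T_{\mathbb{R}}(\mathcal{M})) = 2$ and $\textbf{k}(\mathcal{M}) = \mathbb{Q}$.

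First I would establish $\dim p(T_{\mathbb{R}}(\mathcal{M})) = 2$. By Lemma \ref{MaxForniSp}, complete degeneracy of the KZ-spectrum is equivalent to the Forni subspace $F(x)$ having real dimension $2g-2$ for a.e. $x$. By \cite[Thm.~1.3]{AvilaEskinMollerForniBundle}, $p(T_{\mathbb{R}}(\mathcal{M}))(x)$ is orthogonal to $F(x)$ with respect to the Hodge inner product, and since $H^1(X,\mathbb{R})$ has real dimension $2g$, the orthogonal complement of a $(2g-2)$-dimensional subspace has dimension exactly $2$. (The subspace $p(T_{\mathbb{R}}(\mathcal{M}))$ always contains the two-dimensional tautological subspace spanned by $\mathrm{Re}(\omega)$ and $\mathrm{Im}(\omega)$, so this is forced to be an equality.) Next, Lemma \ref{RatFieldDef} directly gives $\textbf{k}(\mathcal{M}) = \mathbb{Q}$.

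Having verified both hypotheses, the conclusion follows immediately from Lemma \ref{LDiscreteGen}: for any $(X,\omega) \in \mathcal{M}$, the set $L(X,\omega)$ of absolute periods is a discrete lattice in $\mathbb{C}$. Since both ingredient lemmas are already proved in the excerpt, there is essentially no obstacle here; the corollary is a bookkeeping step that packages together the dimension statement coming from Forni's work and the rationality statement of Lemma \ref{RatFieldDef} so that they can be fed into the general criterion of Lemma \ref{LDiscreteGen}. The substantive content will be used downstream to build the torus covering map $\pi$ via the Abel--Jacobi-type construction $(X,\omega)\mapsto \mathbb{C}/L(X,\omega)$.
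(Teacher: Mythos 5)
Your proposal is correct and follows essentially the same route as the paper, which also deduces the corollary by combining Lemma \ref{MaxForniSp} (together with the orthogonality statement of \cite[Thm.~1.3]{AvilaEskinMollerForniBundle}) to get $\dim p(T_{\mathbb{R}}(\mathcal{M})) = 2$, Lemma \ref{RatFieldDef} to get $\textbf{k}(\mathcal{M}) = \mathbb{Q}$, and then applying Lemma \ref{LDiscreteGen}. Your added remark about why the dimension is exactly $2$ (the tautological plane spanned by $\mathrm{Re}(\omega)$ and $\mathrm{Im}(\omega)$ lying in $p(T_{\mathbb{R}}(\mathcal{M}))$) simply makes explicit what the paper leaves implicit.
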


Let $(X,\omega) \in \mathcal{M}$, where $\mathcal{M}$ is an AIS with completely degenerate KZ-spectrum, and thus by Lemma \ref{RatFieldDef}, $\textbf{k}(\mathcal{M}) = \mathbb{Q}$.  Let $z_0 \in X$.  Define the function
$$\pi(z) := \int_{z_0}^z \omega \mod L.$$
This function is well-defined because integrals along paths on $X$ are defined up to integrals along absolute homology classes.  Therefore, after modding out by this discrepancy, i.e. $L$, which in our case forms a lattice in $\mathbb{C}$ by Corollary \ref{LDiscrete}, we get a map from $X$ to the torus $\mathbb{T}^2$.  We show that restricting to the subset of $\mathcal{M}$ consisting of square-tiled surfaces, the map $\pi$ agrees with the map $\pi_{opt}$ up to the finite set of branch points, which are forgotten under $\pi$.

\begin{lemma}
\label{PiSqTiled}
Let $\mathcal{M}$ be an AIS with completely degenerate KZ-spectrum.  If $(X,\omega) \in \mathcal{M}$ is a square-tiled surface, then $\pi = \pi_{opt}$ up to isogeny, i.e. $\pi$ is a covering map of the torus with no non-trivial intermediate toral covers.
\end{lemma}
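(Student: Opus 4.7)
The plan is to show that the map $\pi$ built in this section literally coincides with the minimal-degree torus cover $\pi_{opt}$, after identifying the target tori. The crux is that the period lattice $L = L(X,\omega)$ is maximal among sublattices of $\mathbb{C}$ that can serve as targets for a torus cover along $\omega$, in the precise sense that any such target lattice $\Lambda$ must satisfy $L \subseteq \Lambda$, while minimality of covering degree corresponds to \emph{sparsity} of $\Lambda$.

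First, by Corollary \ref{LDiscrete}, $L$ is a discrete subgroup of $\mathbb{C}$, and since $(X,\omega)$ is square-tiled, the periods of $\omega$ span $\mathbb{C}$ over $\mathbb{R}$, so $L$ is in fact a rank-two lattice. Hence $\mathbb{C}/L$ is a genuine torus, and $\pi(z) = \int_{z_0}^z \omega \bmod L$ is well-defined. Away from the zeros of $\omega$, $\pi$ is a local biholomorphism since $\omega$ furnishes a local coordinate; at a zero of order $k$, $\pi$ is locally modelled by $w \mapsto w^{k+1}$. Hence $\pi$ is a branched holomorphic cover of finite degree $\mathrm{area}(X)/\det(L)$.

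Next I compare $\pi$ with $\pi_{opt}$. For any torus cover $\phi: X \to \mathbb{C}/\Lambda$ satisfying $\phi^* dz = \omega$, and for any $\gamma \in H_1(X,\mathbb{Z})$, one has
$$\int_\gamma \omega = \int_{\phi_*\gamma} dz \in \Lambda,$$
so $L \subseteq \Lambda$. In particular $L \subseteq \Lambda_{opt}$, where $\Lambda_{opt}$ defines $\pi_{opt}$, and the quotient $q: \mathbb{C}/L \to \mathbb{C}/\Lambda_{opt}$ is an isogeny satisfying $q \circ \pi = \pi_{opt}$ (up to a translation of the target, which can be absorbed into the choice of $z_0$). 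Comparing degrees gives $\deg(\pi_{opt}) = [\Lambda_{opt}:L]\cdot \deg(\pi)$, because passing from $L$ to $\Lambda_{opt}$ shrinks the fundamental domain by exactly the factor $[\Lambda_{opt}:L]$. Minimality of $\pi_{opt}$ then forces $[\Lambda_{opt}:L] = 1$, so $\Lambda_{opt} = L$ and $q$ is an isomorphism; hence $\pi = \pi_{opt}$ up to an automorphism of the target torus. The ``no non-trivial intermediate toral covers'' assertion is then immediate: any factorization $X \to \mathbb{C}/\Lambda \to \mathbb{C}/L$ with $L \subsetneq \Lambda$ would furnish a torus cover of $X$ of strictly smaller degree than $\pi_{opt}$, a contradiction.

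The only real hurdle is bookkeeping: one must keep straight the correspondence ``larger lattice $\leftrightarrow$ smaller fundamental domain $\leftrightarrow$ larger covering degree,'' so that minimality of degree genuinely pins down $\Lambda_{opt} = L$ and not the reverse inclusion; once this is fixed, every step above is essentially formal.
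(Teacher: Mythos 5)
Your proof is correct, and its core step is the same as the paper's: pushing absolute cycles forward under a translation cover $\phi\colon X \to \mathbb{C}/\Lambda$ shows that all periods of $\omega$ lie in $\Lambda$, i.e. $L \subseteq \Lambda_{opt}$, which is exactly what the paper verifies (with $\Lambda_{opt}$ normalized to $\mathbb{Z}[i]$); your added degree count $\deg\phi = [\Lambda:L]\,\deg\pi$ together with minimality of $d_{opt}$ sharpens ``up to isogeny'' to $\Lambda_{opt} = L$ and $\pi = \pi_{opt}$ on the nose, a point the paper leaves implicit. One bookkeeping slip in your final sentence: a nontrivial intermediate toral cover $X \to \mathbb{C}/\Lambda \to \mathbb{C}/L$, with the second map a translation covering, requires $\Lambda \subsetneq L$, not $L \subsetneq \Lambda$; with the inclusion corrected, the first map has degree $\deg\pi/[L:\Lambda] < d_{opt}$ (or one notes directly that $\Lambda \subsetneq L$ contradicts the containment $L \subseteq \Lambda$ forced by the period argument), and your contradiction goes through as intended.
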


\begin{proof}
First we claim that $\pi$ is a covering map of the torus after puncturing the zeros of $\omega$ and their images under $\pi$.  Consider an open set $U \subset E^*$, where $E^*$ denotes the punctured torus.  Then the only sets which map to $U$ under $\pi$ are exactly the translates of $U$ by $L$.  For all $P \in E^*$, $|\pi^{-1}(P)|$ is a fixed constant $d$.  Hence, $\pi$ is a covering map of degree $d$.

By definition of $\pi_{opt}$, any covering map $\pi_0$ can be factored as $\pi_0 = \iota \circ \pi_{opt}$, where $\iota$ is an isogeny and $\pi_{opt}$ is a covering map of the torus with no intermediate torus covers.  Hence, $\pi = \iota \circ \pi_{opt}$.  Finally, the lattice $L$ can be taken so that there is no intermediate torus cover by letting $L$ be the ``coarsest'' lattice containing $L(X,\omega)$, i.e. for all $L' \supset L(X,\omega)$, $L \subset L'$.
\end{proof}


The following lemma is essentially \cite[Lemma 2.1]{EskinMasurSchmollRectBar}, which is proven in the stratum $\mathcal{H}(1,1)$.  However, the argument is completely general and written in full generality in the arXiv version of their paper.

\begin{lemma}
\label{PiHasSameDegree}
Let $\{\pi^{(n)}_{opt}\}_{n=1}^{\infty}$ be a sequence of optimal branched coverings of a torus, each having degree $d_{opt}$.  For any subsequence of $\{\pi^{(n)}_{opt}\}_{n=1}^{\infty}$ converging to a torus covering $\pi'$, $\pi'$ has the same degree as each of the maps $\pi^{(n)}_{opt}$.
\end{lemma}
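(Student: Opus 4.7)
The plan is to use the fact that any branched covering of a flat torus $\pi: (X,\omega) \to (T,dz)$ coming from an Abelian differential is a local isometry away from the branch locus, because $\omega$ is pulled back from the flat form $dz$ on $T$. Consequently, the degree of such a covering is computed by the simple formula
$$\deg(\pi) = \frac{\text{area}(X)}{\text{area}(T)}.$$
This reduces the problem to a continuity-of-area argument.

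First I would set notation $\pi^{(n)}_{opt}:(X_n,\omega_n)\to (T_n,dz_n)$ and observe that the surfaces $X_n$ carry unit area by the standing normalization of the paper, while the hypothesis $\deg(\pi^{(n)}_{opt})=d_{opt}$ combined with the area-ratio formula forces $\text{area}(T_n)=1/d_{opt}$ for every $n$. This uniform area is the invariant that I will propagate through the limit.

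Next I would unpack what it means for the subsequence to converge to a torus covering $\pi':(X',\omega')\to (T',dz')$. On the source side, $(X_n,\omega_n)\to (X',\omega')$ in the appropriate stratum, so $\text{area}(X')=1$ by continuity of the area form. On the target side, one may identify $T_n$ with $\mathbb{C}/L(X_n,\omega_n)$, and $\text{area}(T_n)$ is the determinant of a $\mathbb{Z}$-basis of $L(X_n,\omega_n)$, which is expressed continuously in terms of absolute periods of $\omega_n$. Continuity of period coordinates therefore gives $\text{area}(T')=\lim \text{area}(T_n)=1/d_{opt}$. Applying the area-ratio formula to $\pi'$, which is still a branched covering defined by an Abelian differential as a limit of such, yields $\deg(\pi')=1/(1/d_{opt})=d_{opt}$, as required.

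The main obstacle I anticipate is purely technical: I need to ensure that the period lattice does not degenerate in the limit, i.e.\ that $L(X',\omega')$ remains a discrete lattice of full rank two in $\mathbb{C}$, so that $T'$ really is a genuine flat torus on which the area-ratio formula applies. This is precisely what is built into the hypothesis that the limit $\pi'$ is a \emph{torus} covering, which rules out collapse to a cylinder or to a lower-dimensional object. Once this is granted, the area argument closes the proof, so I would organize the writeup around verifying the area-ratio formula for this class of coverings and then invoking continuity of the period map.
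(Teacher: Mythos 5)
Your proposal is correct and is essentially the same argument as the paper's: the degree of such a flat covering is the ratio of the areas of the source and target, and continuity of the area along the convergent subsequence (together with the hypothesis that the limit is a genuine torus covering) forces the limiting degree to equal $d_{opt}$. The only difference is cosmetic — you normalize the surface to have unit area so the torus has area $1/d_{opt}$, whereas the paper normalizes the torus to have area one so the surfaces have area $d_{opt}$.
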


\begin{proof}
Let $\pi^{(n)}_{opt}: (X_n, \omega_n) \rightarrow (\mathbb{T}_n^2, dz)$.  Fixing the area of the torus $(\mathbb{T}_n^2, dz)$ to be one, for all $n$, implies the area of $(X_n,\omega_n)$ is $d_{opt}$.  By assumption, $\{\pi^{(n)}_{opt}\}_{n=1}^{\infty}$ is a sequence of holomorphic maps converging to a holomorphic map.  This implies that $(X_n,\omega_n)$ is a sequence of surfaces of fixed area $d_{opt}$ converging to a surface also carrying a holomorphic differential, and therefore, the area must be preserved along this sequence.  This implies that the limit of these surfaces $(X', \omega')$ must also have area $d_{opt}$ and $\pi'_{opt}$ must be a covering map of $(X', \omega')$ to the torus, whose area is also preserved.  This forces the degree of $\pi'_{opt}$ to be $d_{opt}$, the ratio of the areas.
\end{proof}

\subsection{Review of Results for Teichm\"uller Curves}

The results in this subsection summarize the results of \cite{MollerShimuraTeich} that will be used in this paper.  The following lemma is proven in \cite{MollerShimuraTeich}, and we present an alternate proof here.

\begin{lemma}
\label{TeichCurveIsArith}
A Teichm\"uller curve with completely degenerate KZ-spectrum is arithmetic, i.e. it is generated by a square-tiled surface.
\end{lemma}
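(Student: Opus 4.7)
The plan is to derive the statement almost immediately from Corollary \ref{DenseSubset}, using only the dimension constraint built into the definition of a Teichm\"uller curve. By hypothesis $\mathcal{M}$ is a Teichm\"uller curve with completely degenerate KZ-spectrum, so in particular $\mathcal{M}$ is an AIS to which Corollary \ref{DenseSubset} applies. That corollary hands us a dense subset of square-tiled surfaces sitting inside $\mathcal{M}$; the first step is simply to pick any one of them, say $(X_0,\omega_0) \in \mathcal{M}$.

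The second step is a dimension-counting argument. Since $(X_0,\omega_0)$ is square-tiled it is a Veech surface, and its orbit closure $\mathcal{N}_0 := \overline{\text{SL}_2(\mathbb{R}) \cdot (X_0,\omega_0)}$ is by definition an arithmetic Teichm\"uller curve. Because $\mathcal{M}$ is closed and $\text{SL}_2(\mathbb{R})$-invariant and contains $(X_0,\omega_0)$, we have $\mathcal{N}_0 \subseteq \mathcal{M}$. Both $\mathcal{N}_0$ and $\mathcal{M}$ are Teichm\"uller curves, so $\dim_{\mathbb{R}} T_{\mathbb{R}}(\mathcal{N}_0) = \dim_{\mathbb{R}} T_{\mathbb{R}}(\mathcal{M}) = 2$. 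Since $\mathcal{N}_0$ is an AIS of the same (maximal) dimension as $\mathcal{M}$ and sits inside it, the inclusion forces $\mathcal{N}_0 = \mathcal{M}$. Thus $\mathcal{M}$ itself is generated by a square-tiled surface and is therefore an arithmetic Teichm\"uller curve.

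The substantive content has already been spent: Lemma \ref{RatFieldDef} (via the unipotent monodromy element constructed from the cylinder decomposition of Corollary \ref{RankOneCylConfig} and Wright's Galois-decomposition of $H^1$) is what forced $\mathbf{k}(\mathcal{M}) = \mathbb{Q}$, and Corollary \ref{DenseSubset} packaged that rationality into the existence of a square-tiled point in $\mathcal{M}$. Given those inputs, no real obstacle remains; the only residual input is the classical observation that a square-tiled surface generates an arithmetic Teichm\"uller curve, after which the proof is just a containment of AIS of equal dimension.
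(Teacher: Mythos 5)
Your proof is correct, but it reaches the lemma by a somewhat different route than the paper. The paper's own proof is two lines: Lemma \ref{RatFieldDef} gives $\textbf{k}(\mathcal{M})=\mathbb{Q}$, and then it cites Gutkin--Judge for the statement that a Teichm\"uller curve defined over $\mathbb{Q}$ is arithmetic, i.e.\ generated by a square-tiled surface. You instead route through Corollary \ref{DenseSubset}: the density of rational points in period coordinates supplies a square-tiled surface $(X_0,\omega_0)\in\mathcal{M}$, and since a square-tiled surface is a Veech surface whose $\text{SL}_2(\mathbb{R})$-orbit closure $\mathcal{N}_0$ is an arithmetic Teichm\"uller curve, the containment $\mathcal{N}_0\subseteq\mathcal{M}$ of two-dimensional affine invariant submanifolds forces $\mathcal{N}_0=\mathcal{M}$. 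Both arguments rest on the same substantive input, Lemma \ref{RatFieldDef}, and there is no circularity since Corollary \ref{DenseSubset} depends only on that lemma. What your version buys is that you only use the ``easy'' direction already stated in the paper's preliminaries (square-tiled implies arithmetic Teichm\"uller curve), rather than the statement that a Teichm\"uller curve with rational field of definition (equivalently rational trace field, via Wright) must be square-tiled; the cost is that you lean on the rational-points-give-square-tiled-surfaces argument packaged in Corollary \ref{DenseSubset}. One small simplification: the dimension count at the end is not needed. Since $\mathcal{M}$ is a Teichm\"uller curve it is a single closed $\text{SL}_2(\mathbb{R})$-orbit, so the orbit closure of any of its points, in particular of $(X_0,\omega_0)$, is already all of $\mathcal{M}$.
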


\begin{proof}
By Lemma \ref{RatFieldDef}, any Teichm\"uller curve with completely degenerate KZ-spectrum has rational field of definition.  By definition, Teichm\"uller curves defined over $\mathbb{Q}$ are arithmetic and are generated by square-tiled surfaces by \cite{GutkinJudge}.
\end{proof}

Let $E_m = \mathbb{C}/(m\mathbb{Z}[i])$ be a torus.  The lattice points of $E_m$ are called \emph{$m$-torsion points}.  Define $E = E_1$. By scaling, it makes sense to discuss the $m$-torsion points of $E$.  Any Veech surface $(X,\omega)$ with completely degenerate KZ-spectrum is square-tiled by Lemma \ref{TeichCurveIsArith}.  Hence, it admits an optimal covering $\pi_{opt}: X \rightarrow E$, which is branched over finitely many points $B \subset E$, and optimal in the sense that there is no intermediate torus covering.  Let $E^* = E \setminus B$.  The degree of $\pi_{opt}$ is denoted by $d_{opt}$.  We state an abbreviated version of \cite[Cor. 5.15]{MollerShimuraTeich}.

\begin{corollary}
\label{MoellerCor515}
If $(X, \omega)$ is a Veech surface generating a Teichm\"uller curve with completely degenerate KZ-spectrum, then $X$ has genus $g = 3,4,5$, $(X,\omega)$ admits a degree $d_{opt}$ ramified covering of the torus, where $d_{opt}$ can be explicitly calculated, and depends only on the stratum in which $(X, \omega)$ lies.  Furthermore, $d_{opt} \leq 36$, and $d_{opt}$ is different for every stratum.
\end{corollary}

A cylinder $C$ in $E^*$ is bounded by points in $B$ and lifts to one or more cylinders in $\pi_{opt}^{-1}(C) \subset X$.  The following lemma is \cite[Lem. 5.17]{MollerShimuraTeich} and it fully describes how cylinders bounded by punctures on the torus $E^*$ lift to the Veech surface $(X,\omega)$, where $X$ has genus five.

\begin{lemma}[M\"oller]
\label{MollerCaseLemma}
In each of the possible strata in genus five containing Teichm\"uller curves with completely degenerate KZ-spectrum, one of the following three possibilities holds.
\begin{itemize}
\item[(i)] The preimage under $\pi_{opt}$ of each cylinder in $E^*$ consists of only one cylinder in $X$, or
\item[(ii)] $B$ consists of one element only and each cylinder in $E^*$ has $k$ preimages under $\pi_{opt}$ where $2 \leq k \leq 4$, or
\item[(iii)] $B$ is contained in the set of $2$-torsion points of $E^*$. Moreover, each cylinder in $E^*$ has two preimages under $\pi_{opt}$.
\end{itemize}
\end{lemma}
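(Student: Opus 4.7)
The plan is to run a case analysis keyed to three pieces of data: (a) Möller's explicit list of the possible genus-five strata and the corresponding degree $d_{opt}$, (b) Riemann--Hurwitz applied to the branched cover $\pi_{opt} \colon X \to E_n$, and (c) the rigid cyclic cylinder structure from Corollary \ref{RankOneCylConfig} with all circumferences equal. Since $X$ has genus five and $E_n$ has genus one, Riemann--Hurwitz gives
$$8 = \sum_{x \in \pi_{opt}^{-1}(B)} (e_x - 1),$$
and because the branching is concentrated over $B$ and exhausted by the zeros of $\omega$ prescribed by the stratum, the possible distributions of zeros over the points of $B$ are tightly constrained. For each stratum in Möller's table I would first tabulate every admissible $(|B|, \{e_x\})$-pattern.

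Next, I would bring in the cylinder geometry. For a periodic direction on $E_n$, the cylinders $C_j^{\mathrm{torus}}$ of $E_n^*$ are bounded by saddle connections joining points of $B$, and each lifts under $\pi_{opt}$ to a disjoint union of cylinders of $X$ whose circumferences, by Corollary \ref{RankOneCylConfig}, are all equal to the common circumference $1$ (after normalization). If $C_j^{\mathrm{torus}}$ has width $w_j$ and $k_j$ preimage cylinders on $X$ winding $m_j$ times, then $k_j m_j w_j = 1$ for every $j$, and $\sum_j k_j m_j w_j \cdot \text{height contribution} = d_{opt}$. Running this in both the horizontal and vertical directions (and, if necessary, one additional rational direction) forces strong commensurability between the positions of the points of $B$.

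With this framework the three cases separate cleanly. If every $k_j = 1$, case (i) holds and nothing remains to prove. If $|B| = 1$, then in each direction $E_n^*$ is a single cylinder and the lifting pattern reduces to choosing a single integer $k$; combining with Riemann--Hurwitz and the stratum-by-stratum table limits $k$ to the range $2 \le k \le 4$, giving case (ii). The remaining, and hardest, situation is $|B| \ge 2$ with some $k_j \ge 2$. Here the equal-circumference condition applied simultaneously to the horizontal and vertical decompositions forces the set $B$ to be invariant under the hyperelliptic involution $z \mapsto -z$ on $E_n$, i.e. $B \subset E_n[2]$, and a further compatibility check (Riemann--Hurwitz against the tabulated zero patterns) forces each $k_j = 2$, which is case (iii).

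The main obstacle is the last case. The difficulty is not any single calculation but the bookkeeping: one must show that no "mixed" configuration can survive, where some torus cylinders lift uniquely while others split, or where $B$ has size $\ge 2$ but escapes the two-torsion. My strategy for this is to use two independent periodic directions and equate the common circumference computed from each, turning the problem into a rigidity statement about the $B$-orbit in $E_n$ under translation by horizontal and vertical periods; the only finite subsets of $E_n$ fixed by the resulting symmetry group and compatible with the stratum data from Möller's table are the singleton (case (ii)) and subsets of $E_n[2]$ (case (iii)).
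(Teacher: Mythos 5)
This lemma is not proved in the paper at all: it is quoted verbatim as \cite[Lem.\ 5.17]{MollerShimuraTeich}, so there is no in-paper argument to match, and your proposal has to stand on its own as a reproof of M\"oller's result. As it stands it has genuine gaps. First, the Riemann--Hurwitz input is vacuous: for the translation cover $\pi_{opt}$ one has $\omega = \pi_{opt}^*\,dz$, so $e_x - 1$ is exactly the order of the zero of $\omega$ at $x$ and $\sum_x (e_x-1) = 2g-2 = 8$ is just a restatement of the stratum; it does not constrain $|B|$, the distribution of zeros over branch points, or the lifting multiplicities, so the ``admissible $(|B|,\{e_x\})$-patterns'' you propose to tabulate are not actually cut down by this step. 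Second, your circumference bookkeeping is off: on the torus all cylinders in a fixed rational direction share the same circumference $c$ and differ only in height, a preimage cylinder winding $m$ times has circumference $mc$ and the \emph{same} height (the cover is unramified on the open cylinder), and the sum of the winding numbers over each torus cylinder is $d_{opt}$. The correct consequence of Corollary \ref{RankOneCylConfig} is that all winding numbers equal a common $m$, hence every torus cylinder has the same number $k = d_{opt}/m$ of preimages; your relation $k_j m_j w_j = 1$ mixes widths into an equation where they do not belong, and the dichotomy ``some $k_j=1$, some $k_j\ge 2$'' that you worry about in the last case cannot in fact occur once this is set up correctly.

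The decisive gap is the final step. The assertion that equal circumferences in two directions force $B$ to be invariant under $z\mapsto -z$, hence $B\subset E_n[2]$, and that compatibility then forces $k=2$, is stated as a ``rigidity statement'' but never argued; nothing in your framework produces the involution, and this is precisely where M\"oller's actual proof does stratum-by-stratum work, using the explicit values of $d_{opt}$ from \cite[Cor.\ 5.15]{MollerShimuraTeich}, the number and orders of zeros lying over each branch point, and the cyclic cylinder configuration, to enumerate which fiber configurations over $B$ are possible in each of the finitely many genus-five strata. Similarly, the bound $2\le k\le 4$ in case (ii) is not derived: you would need an argument such as the cyclic configuration forcing at most four cylinders on a genus-five surface (each complementary piece having positive genus), or the explicit divisibility $k\mid d_{opt}$ combined with the tabulated $d_{opt}$, neither of which appears. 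Until the case analysis over the actual strata and $d_{opt}$ values is carried out, the proposal is a plausible outline of the right kind of argument, not a proof.
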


We conclude with a simple corollary of these results that will be useful later.

\begin{lemma}
\label{CaseiReduction}
Any infinite family of Teichm\"uller curves $\{\mathcal{C}_n'\}_{n=1}^{\infty}$ in genus five with completely degenerate KZ-spectrum contains an infinite family of curves $\{\mathcal{C}_n\}_{n=1}^{\infty}$ such that for each $n$, there is a Veech surface $(X_n, \omega_n) \in \mathcal{C}_n$ such that $(X_n, \omega_n)$ satisfies Case i) of Lemma \ref{MollerCaseLemma}.
\end{lemma}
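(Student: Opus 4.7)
The plan is to argue by contradiction: if the conclusion fails, then for infinitely many $n$ the Teichm\"uller curve $\mathcal{C}_n'$ contains no Veech surface satisfying Case (i), and I would show that Cases (ii) and (iii) each support only finitely many such Teichm\"uller curves, producing the desired contradiction.

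First, since there are only finitely many strata of Abelian differentials in genus five, I would pass to an infinite subfamily of $\{\mathcal{C}_n'\}$ lying in a single stratum, thereby fixing $d_{opt}$. Suppose for contradiction that only finitely many members of this subfamily contain a Veech surface in Case (i); discarding these yields an infinite subfamily in which every square-tiled Veech surface lies in Case (ii) or (iii). By Lemma \ref{TeichCurveIsArith}, each remaining $\mathcal{C}_n'$ is generated by a square-tiled Veech surface $(X_n, \omega_n)$, which I would normalize to have unit area. Then $\pi_{opt}: X_n \to E_n$ has fixed degree $d_{opt}$ and target torus of area $1/d_{opt}$, a single fixed square torus up to isomorphism. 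A further pigeonhole yields an infinite subfamily all lying in the same case, either (ii) or (iii).

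Next, in Case (ii), the branch locus $B$ consists of a single point, which I may translate to $0 \in E_n$ without changing the isomorphism class of the cover. The covering $\pi_{opt}: X_n \to E_n$ is then classified, up to isomorphism, by a conjugacy class of transitive homomorphisms $\pi_1(E_n \setminus \{0\}) \to S_{d_{opt}}$ together with the ramification data at $0$ forced by the monodromy. Since $\pi_1(E_n \setminus \{0\}) \cong F_2$ is finitely generated and $S_{d_{opt}}$ is a fixed finite group, there are only finitely many such classes, hence only finitely many possible flat surfaces $(X_n,\omega_n)$; this forces only finitely many Teichm\"uller curves $\mathcal{C}_n'$, contradicting the assumption that the subfamily is infinite. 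Case (iii) is handled identically: $B$ is one of at most $2^4$ subsets of the 2-torsion points of $E_n$, and for each the cover is classified by finitely many conjugacy classes of transitive homomorphisms from $\pi_1(E_n \setminus B)$, a free group of rank at most five, into $S_{d_{opt}}$, so only finitely many curves arise.

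I expect the main obstacle to be ensuring that the monodromy-counting argument correctly translates into a count of Teichm\"uller curves rather than merely of pointed covers: one must pin down the normalization of $E_n$ so that distinct monodromy classes really yield inequivalent flat surfaces, and one must verify that absorbing the translation of $E_n$ used to place branch points at canonical positions does not identify distinct Teichm\"uller curves. Once these normalizations are in place, the finiteness of monodromy classes bounds the number of Teichm\"uller curves in Cases (ii) and (iii) by a constant depending only on the stratum, which is all that is needed.
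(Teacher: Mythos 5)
Your proposal is correct and follows the paper's strategy: argue that Cases (ii) and (iii) of Lemma \ref{MollerCaseLemma} can account for only finitely many Teichm\"uller curves because the branch locus is pinned to the origin or the $2$-torsion points while the degree $d_{opt}$ is fixed, so any infinite family must contain infinitely many curves with a Case (i) representative. The only difference is in bookkeeping: the paper notes the covering surface is tiled by at most $4d_{opt}$ unit squares and invokes finiteness of such square-tiled surfaces, whereas you count conjugacy classes of monodromy homomorphisms from a finitely generated free group into $S_{d_{opt}}$ after normalizing the target torus by the $\text{SL}_2(\mathbb{R})$-action; these are essentially the same finiteness count.
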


\begin{proof}
We show that there are at most finitely many examples of Teichm\"uller curves in genus five, not obeying the assumptions of this lemma.  Consider Cases ii) and iii) of Lemma \ref{MollerCaseLemma}.  In each of these cases, the branch points on the torus lie at either the origin or the 2-torsion points.  Hence, the torus can be realized as a single square or four unit squares, respectively.  This implies that the surface covering the torus can be realized by at most $4d_{opt}$ squares.  Hence, there can be only finitely many such examples in these cases.
\end{proof}

Since a Teichm\"uller curve contained in an AIS with completely degenerate KZ-spectrum must also have completely degenerate KZ-spectrum, we combine Corollary \ref{LDiscrete} and Lemmas \ref{PiSqTiled}, \ref{PiHasSameDegree}, \ref{MollerCaseLemma}, and \ref{CaseiReduction} to prove

\begin{corollary}
\label{CaseiRedCor}
Let $\mathcal{M}$ be an AIS in genus five with completely degenerate KZ-spectrum that is not a Teichm\"uller curve.  If $(X,\omega) \in \cM$, then $M$ is a torus cover of degree $d_{opt}$ depending only on the stratum in which $\cM$ lies, and the covering of $(X,\omega)$ to the punctured torus satisfies the Case i) property of Lemma \ref{MollerCaseLemma}.
\end{corollary}

\begin{proof}
It suffices to show that the limit of torus covers satisfying Case i) also satisfies Case i).  Furthermore, it suffices to assume that all limits are contained in $\cM \subset \mathcal{H}(\kappa)$, i.e. no zeros collide.  Let $\cM_0 \subset \cM$ be the union of Teichm\"uller curves not satisfying Case i).  By the proof of Lemma \ref{CaseiReduction}, $\cM_0$ is a finite union of Teichm\"uller curves.  Let $\cC$ be a Teichm\"uller curve in $\cM_0$.  Then there is a sequence of torus covers $(X_n, \omega_n) \in \cM \setminus \cM_0$ converging to a torus cover $(X', \omega')$ in $\cC$.  Let $C_n \subset E^{(n)*}$ be a sequence of cylinders that converges to a cylinder $C' \subset E'^*$.  For all $n$, $C_n$ lifts to a unique cylinder.  However, $C'$ must also lift to a unique cylinder corresponding to the limit of the lifts of $C_n$ under $\pi_{opt}$.  Moreover, there cannot be an intermediate torus cover where $C'$ lifts to more than one cylinder because all lifts of $C'$ lift to one cylinder.  Hence, $\cM_0$ is empty.
\end{proof}

\subsection{Partial Compactification of Moduli Space}

In recent work of Mirzakhani and Wright \cite{MirzakhaniWrightBoundary}, they developed a partial compactification of moduli space well-suited to the study of affine invariant submanifolds.  Roughly speaking the idea is to consider limits of translation surfaces by taking limits in period coordinates, where all of the period coordinates remain finite and any period coordinates that converge to zero are discarded.  The theorem proven by them describes the tangent space of the boundary affine manifold when the boundary translation surface is connected.  It is conjectured that even if the boundary component is a ``multi component translation surface,'' then the boundary also contains an affine manifold.  However, it relies on a generalization of \cite{EskinMirzakhaniMohammadiOrbitClosures} and it can be avoided here due to an observation, cf. Lemma \ref{BdMIsConn}.

We outline our strategy here.  We use the definition of convergence in the context of \cite{MirzakhaniWrightBoundary} and assume that there exists a convergent sequence of degenerating translation surfaces.  If $(X', \omega')$ denotes the boundary translation surface, we use the maximality of the Forni subspace to argue that $(X', \omega')$ is connected in the sense of \cite{MirzakhaniWrightBoundary}.  This allows us to circumvent the ''multi component'' conjecture and apply the theorem of \cite{MirzakhaniWrightBoundary}.  Next we show that the Forni subspace of any boundary affine manifold must also be maximal.  Finally, we establish that there always exists a boundary affine manifold, and this allows the degeneration arguments of the next section to begin.

\begin{definition}
A \emph{multicomponent translation surface} is a collection $(X, \omega, \Sigma)$. Here $X$ is a compact Riemann surface with at most finitely many connected components, $\omega$ is an Abelian differential that is nonzero on every connected component of $X$, and $\Sigma \subset X$ is a finite set of marked points. We require that $\Sigma$ contain all of the zeros of $\omega$.  If there are $k$ connected components, we will assume that they are labeled by $\{1, \ldots, k\}$.
\end{definition}

We will use the terms \emph{marked points} and \emph{punctures} interchangeably throughout this paper.

\begin{definition}
Let $\cH$ and $\cH'$ be two strata of multicomponent translation
surfaces.  We say that $(X_n, \omega_n, \Sigma_n) \in \cH$ converges to $(X', \omega', \Sigma') \in \cH'$ if there are decreasing neighborhoods $U_n \subset X'$ of $\Sigma'$ with $\cap U_n = \Sigma'$ such that the following holds. There are maps $g_n: X_n \setminus U_n \rightarrow X' \setminus U_n$ that are diffeomorphisms onto their range, such that
\begin{itemize}
\item (1) $g_n^*(\omega_n)$ converges to $\omega'$ in the compact open topology on $X' \setminus \Sigma'$,
\item (2) the injectivity radius of points not in the image of $g_n$ goes to zero uniformly in $n$.
\end{itemize}
Injectivity radius at a point is defined as the sup of $\varepsilon$ such that the ball of radius $\varepsilon$ in the flat metric centered at that point is embedded and does not contain any marked points.
\end{definition}

\begin{lemma}
\label{BdMIsConn}
Let $\cM$ be an AIS with completely degenerate KZ-spectrum.  If $\{(X_n, \omega_n)\}$ is a sequence in $\cM$ converging to $(X', \omega')$ in the sense defined above, then $(X', \omega')$ is connected, i.e. in the definition of a multi-component translation surface, $k=1$.
\end{lemma}

\begin{proof}
By contradiction, assume that $(X', \omega')$ is not connected.  Then there exist at least two connected components $(X'_1, \omega'_1)$ and $(X'_2, \omega'_2)$.  By the definition of convergence, we have $\omega'_i|_{X_i'} \not\equiv 0$ and $\omega'_i|_{X_j'} \equiv 0$, for $i \not= j$.  Moreover, by definition, $\omega'|_{X'_1} = \omega'_1$ and $\omega'|_{X'_2} = \omega'_2$.  It is clear that as elements of the finite dimensional space of Abelian differentials on $(X', \omega')$, $\omega'_1$ and $\omega'_2$ are linearly independent.  For each $i, j \in \{1,2\}$, we have
$$\int_{X'}\omega_i'\omega_i'\frac{\omega'}{\omega'} = \int_{X'_i}|\omega_i'|^2 \not= 0,$$
and $i \not= j$
$$\int_{X'}\omega_i'\omega_j'\frac{\omega'}{\omega'} = 0.$$
Hence, at the boundary, there is a $2 \times 2$ minor of the derivative of the period matrix of full rank.  By either \cite{FayThetaFcns, Yamada} or \cite[Sect. 3]{AthreyaBufetovEskinMirzakhani}, there exists $N$ sufficiently large such that the derivative of the period matrix $(X_n, \omega_n) \in \cM$ has a $2 \times 2$ minor of full rank as well.  This contradicts the fact that $\cM$ has maximal Forni subspace and shows that every boundary translation surface is connected.
\end{proof}

\begin{proposition}\cite{MirzakhaniWrightBoundary}
\label{MWCollapseMaps}
Suppose that $(X_n, \omega_n, \Sigma_n) \in \cH$ converges to $(X', \omega', \Sigma') \in \cH'$.  Then there are collapse maps $f_n: X_n \rightarrow X'$ that map marked points to marked ponits and have the following property: for any $\varepsilon > 0$ and $n$ sufficiently large, the $f_n$ are the inverse to $g_n$ on the subset $(X_n, \omega_n, \Sigma_n)$ with injectivity radius at least $\varepsilon$.
\end{proposition}

This allows us to define the space of vanishing cycles:
$$V_n := \text{ker}(f_n: H_1(X_n, \Sigma_n, \bC) \rightarrow H_1(X', \Sigma', \bC)).$$
The significance of this sequence of spaces is captured by \cite[Prop. 2.4]{MirzakhaniWrightBoundary}.

\begin{proposition}\cite{MirzakhaniWrightBoundary}
\label{VanEventConst}
Each space $V_n$ is well-defined, and the sequence of spaces $\{V_n\}$ is eventually constant.
\end{proposition}

In light of this proposition, it suffices to assume $n$ is sufficiently large and suppress the index from the notation.  The following is \cite[Prop. 2.5]{MirzakhaniWrightBoundary}.

\begin{proposition}\cite{MirzakhaniWrightBoundary}
\label{BdTangIsAnn}
In the situation above, $\text{Ann}(V)$ is naturally identified with the tangent space to the boundary stratum $\cH'$.  Furthermore, there is a neighborhood of $0$ in $\text{Ann}(V)$ such that if $\xi_n$ and $\xi'$ are in this neighborhood and $\xi_n$ converges to $\xi'$, then $(X_n, \omega_n, \Sigma_n) + \xi_n$ converges to $(X', \omega', \Sigma') + \xi' \in \cH'$.
\end{proposition}

Finally, we state \cite[Thm. 2.7]{MirzakhaniWrightBoundary}.

\begin{theorem}[\cite{MirzakhaniWrightBoundary}]
\label{MWBdTanSpThm}
Let $\cM$ be an affine invariant sub-manifold.  The subset of the boundary of $\cM$ consisting of connected surfaces consists of a finite union of affine invariant submanifolds.  If $(X_n, \omega_n) \in \cM$ converge to a connected translation surface $(X', \omega')$ in the boundary, then $(X', \omega')$ is contained in a component $\cM'$ of the boundary of $\cM$ whose tangent space can be computed, for infinitely many $n$, as
$$T(\cM') = T_{(X_n, \omega_n)}(\cM) \cap \text{Ann}(V).$$
Here $V = V_n$ is the space of vanishing cycles, which is eventually constant.
\end{theorem}

The addition of a prime on any object will indicate that it is a boundary object.

\begin{lemma}
\label{BdFSubspIsMax}
Let $\cM$ be an AIS with completely degenerate KZ-spectrum.  If $\cM'$ is a boundary affine manifold, then $\cM'$ has completely degenerate KZ-spectrum.
\end{lemma}

\begin{proof}
By Lemma \ref{BdMIsConn}, the affine manifold $\cM'$ is an affine manifold of connected translation surfaces.  Let $\cM' \subset \cH'$.  By Proposition \ref{BdTangIsAnn}, the tangent space to the boundary stratum $\cH'$ is identified with $\text{Ann}(V)$.  Therefore, we have an isomorphism of $p(T_{\bC}(\cH'))$ with $p(\text{Ann}(V))$.  Since there is also an isomorphism of $p(T_{\bC}(\cM'))$ with $p(T_{\bC}(\cM))) \subseteq p(\text{Ann}(V))$, there is an isomorphism of the symplectic, Hodge, or orthogonal complement of $p(T_{\bC}(\cM'))$ with that of $p(T_{\bC}(\cM) \cap \text{Ann}(V))$, which is contained in the Forni subspace of $\cM$ by \cite{AvilaEskinMollerForniBundle}.  Hence, the entire complement of $p(T_{\bC}(\cM'))$ in $p(\text{Ann}(V))$ is a Forni subspace.
\end{proof}

\begin{corollary}
\label{DegSurfsCor}
Let $\cM$ be an AIS with completely degenerate KZ-spectrum.  If $\cM'$ is an affine manifold in the boundary of $\cM$, then either $\cM'$ lies in the moduli space of genus five translation surfaces, $\cM'$ is the Teichm\"uller curve generated by the Ornithorynque with marked points, the EW with marked points, or the torus with marked points.
\end{corollary}

\begin{proof}
By Lemma \ref{BdFSubspIsMax}, $\cM'$ has completely degenerate KZ-spectrum.  By Corollary \ref{DenseSubset}, an AIS with completely degenerate KZ-spectrum must contain infinitely many Teichm\"uller curves.  Those Teichm\"uller curves must also have completely degenerate KZ-spectrum because if the KZ-cocycle acts isometrically over an \splin -invariant set $\mathcal{M}$, then it acts isometrically on any \splin -invariant subset of $\mathcal{M}$.  Since there are exactly two such Teichm\"uller curves except possibly in genus five, where there may be infinitely many, the corollary follows.\footnote{By convention, we consider the torus to always have completely degenerate KZ-spectrum because it always has a non-zero $1 \times 1$ derivative of its period matrix.}
\end{proof}

\subsection{Existence of a Degenerate Surface}

Up until this point, all of our results concerning the boundary of an AIS with completely degenerate KZ-spectrum have assumed the existence of a degenerate translation surface with either fewer zeros or lower genus.  We now prove that every such AIS does indeed contain such a degenerate translation surface.  Let
$$a(t) = \left[ \begin{array}{cc}
1 & 0\\
0 & e^{t} \end{array} \right].$$

\begin{definition}
Let $\{C_1, \ldots, C_n\}$ be a collection of horizontal cylinders on a translation surface $(X, \omega)$.  For each cylinder $C_i$, act on it by $a(t_i)$.  The collection of matrices, which we denote $a(t_1, \ldots, t_n)$, acting on $\{C_1, \ldots, C_n\}$, is called a \emph{REL stretch}, if the absolute periods of $(X, \omega)$ are fixed under this deformation.  In other words, each cylinder may be acted upon by a different matrix $a(t_i)$, but all of the absolute periods on the surface are fixed.
\end{definition}

\begin{lemma}
\label{DegSurfExists}
Let $\cM$ be an AIS with completely degenerate KZ-spectrum that is not a Teichm\"uller curve.  Then there exists $(X',\omega') \in \cM'$ and a sequence $\{(X_n, \omega_n)\}$ converging to $(X', \omega')$ such that $X'$ either has strictly lower genus than $X$, or $\omega'$ has strictly fewer zeros than $\omega$.
\end{lemma}

\begin{proof}
By Lemma \ref{RankOneCylConfig}, every periodic direction on every translation surface in $\cM$ is a collection of homologous cylinders.  By the assumption that $\cM$ is not a Teichm\"uller curve, and the fact that the dimension of the absolute tangent space is minimal, we have that \cite[Thm. 3.3]{AulicinoZeroExpGen3} implies that there exists a horizontally periodic translation surface $(X, \omega) \in \cM$ and a REL stretch that fixes the total heights of the cylinders, but varies the heights of some collection of cylinders.  This REL stretch can be continued until cylinders collapse, i.e. their height converges to zero.  Since every cylinder is homologous, the total heights of the cylinders represent an absolute period, which is non-zero and fixed under this deformation by definition.  Let $\{(Y_n, \theta_n)\}$ be a sequence in $\cM$ in which one or more cylinders collapse, and let
$$h(t) = \left[ \begin{array}{cc}
1 & t\\
0 & 1 \end{array} \right].$$
Finally, let $\{(X_n, \omega_n) := h(t_n) \cdot (Y_n, \theta_n)\}$ be a sequence such that each element admits at least one vertical saddle connection.  This means that at least one vertical saddle connection is degenerating, so the limit is a degenerate surface with lower genus or fewer zeros.
\end{proof}

\begin{remark}
The key observation for the degeneration arguments in this paper involves reconciling the two ways of viewing a translation surface in an AIS $\cM$ with completely degenerate KZ-spectrum.  On the one hand, it can be viewed as a collection of homologous cylinders.  However, it is far more valuable to observe that by Corollary \ref{LDiscrete} every translation surface in $\cM$ admits a covering to a torus branched over finitely many points.  Since it is possible to deform this surface while fixing absolute periods, it implies that we can fix a specific torus and deform the surface by letting the branch points (marked points) move.  Of course, we cannot choose how those branch points move, but it will suffice to assume that such a deformation exists.
\end{remark}

\section{Proof of the Main Theorem}

Recall that there is exactly one Teichm\"uller curve with completely degenerate KZ-spectrum in each of the two moduli spaces of Abelian differentials over surfaces of genus three and four by \cite{MollerShimuraTeich}.  Furthermore, these are the only AIS with completely degenerate KZ-spectrum in those genera by \cite{AulicinoCompDegKZ}.  By the proof of Corollary \ref{DegSurfsCor}, we have an alternate proof of this result from \cite{AulicinoCompDegKZ}, which we record as an independent proposition here.

\begin{proposition}
\label{MainThmExcGen5}
There do not exist affine manifolds with completely degenerate KZ-spectrum that are not Teichm\"uller curves, except possibly in the moduli space of Abelian differentials on genus five surfaces.
\end{proposition}

The final claim needed to prove Theorem \ref{theorem:MainThm} is that the surface $(X', \omega')$ whose existence is guaranteed by Lemma \ref{DegSurfExists} has genus five.  This implies that we can collapse zeros without pinching any curves, and this will yield the desired contradiction needed in proof of Theorem \ref{theorem:MainThm}.  We summarize this in Proposition \ref{ZerosDegSameGen} and split the proof into multiple lemmas for the ease of the reader.

\begin{proposition}
\label{ZerosDegSameGen}
If $\cM \subset \cH(\kappa)$ is an affine manifold with completely degenerate KZ-spectrum in genus five, then there exists a surface $(X',\omega') \in \partial \cM$ such that $(X', \omega')$ lies in a stratum of genus five surfaces in the boundary of $\cH(\kappa)$.
\end{proposition}

\begin{theorem}
\label{theorem:MainThm}
If $\mathcal{M}$ is an AIS with completely degenerate KZ-spectrum, then $\mathcal{M}$ is an arithmetic Teichm\"uller curve.
\end{theorem}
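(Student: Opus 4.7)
The plan is to argue by contradiction, using the density of arithmetic Teichmüller curves inside $\mathcal{M}$ together with the degeneration machinery of Lemma \ref{ZerosDeg} to produce a genus-five Teichmüller curve in an adjacent stratum, and then derive a contradiction by computing the degree of an optimal torus covering in two ways. By the reduction at the start of this section, we may assume throughout that $(X,\omega)\in\mathcal{M}$ has genus five.

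First I would suppose that $\mathcal{M}$ is not a Teichmüller curve, so $\dim T_{\mathbb{R}}(\mathcal{M}) > 2$. By Corollary \ref{DenseSubset}, there is an infinite family of arithmetic Teichmüller curves $\{\mathcal{C}_n\}_{n=1}^{\infty}$ dense in $\mathcal{M}$, and each $\mathcal{C}_n$ inherits completely degenerate KZ-spectrum. M\"oller's classification leaves only finitely many strata in genus five where such Teichmüller curves can live, so a pigeonhole argument yields an infinite subfamily in a common stratum $\mathcal{H}$. Lemma \ref{ZerosDeg} then produces Veech surfaces $(X_n,\omega_n)\in\mathcal{C}_n$ whose subsequential limit $(X',\omega')$ lies in an adjacent stratum $\mathcal{H}'\neq\mathcal{H}$ in genus five and generates a Teichmüller curve $\mathcal{C}'$ with completely degenerate KZ-spectrum; by Lemma \ref{TeichCurveIsArith}, $\mathcal{C}'$ is arithmetic and $(X',\omega')$ is square-tiled.

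The contradiction would then come from computing the degree of the limiting map $\pi':X'\to\mathbb{T}'$ in two ways, where $\pi'$ is the limit of the maps $\pi^{(n)}:X_n\to \mathbb{T}^2_n$ from Section 4. On the one hand, Lemma \ref{PiSqTiled} applied to $\mathcal{M}$ at each $(X_n,\omega_n)$ identifies $\pi^{(n)}$ with $\pi_{opt}^{(n)}$ up to isogeny, so $\deg \pi^{(n)} = d_{opt}(\mathcal{H})$; since $L(X,\omega)$ is intrinsic to $(X,\omega)$ and varies continuously with the surface, Lemma \ref{PiHasSameDegree} gives $\deg\pi' = d_{opt}(\mathcal{H})$. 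On the other hand, $(X',\omega')$ is square-tiled and lies in the AIS $\mathcal{C}'$ with completely degenerate KZ-spectrum, so Lemma \ref{PiSqTiled} applied to $\mathcal{C}'$ identifies the $\pi$-map of $(X',\omega')$, which coincides with $\pi'$ by continuity, with the optimal covering $\pi_{opt}(X',\omega')$ up to isogeny, forcing $\deg\pi' = d_{opt}(\mathcal{H}')$. The ``no non-trivial intermediate toral covers'' clause of Lemma \ref{PiSqTiled} is essential here to rule out that $\pi'$ is a higher cover of the true optimal torus. Therefore $d_{opt}(\mathcal{H}) = d_{opt}(\mathcal{H}')$, which is incompatible with M\"oller's explicit formula \cite[Cor.~5.15]{MollerShimuraTeich} for two distinct strata in his finite list.

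The main obstacle is ensuring that the two computations of $\pi'$ are legitimately comparable as maps to the same target torus: one must argue that the lattices $L(X_n,\omega_n)$ converge to $L(X',\omega')$ in $\mathbb{C}$, so that the limiting tori $\mathbb{T}^2_n$ converge to a single $\mathbb{T}'$, and that the maps $\pi^{(n)}$ converge (not merely their degrees). Once this continuity is established, the remainder reduces to a finite check against M\"oller's table of $d_{opt}$ values in genus five. The same line of argument, run with an arbitrary infinite family of Teichmüller curves with completely degenerate KZ-spectrum in place of $\{\mathcal{C}_n\}$ and without the dimension hypothesis on $\mathcal{M}$, should also yield the finiteness statement of Corollary \ref{corollary:MainCor}.
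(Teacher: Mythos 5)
Your proposal is correct and follows essentially the same route as the paper: contradiction via Corollary \ref{DenseSubset}, Lemma \ref{ZerosDeg} to produce a limiting Teichm\"uller curve in an adjacent genus-five stratum, Lemma \ref{PiHasSameDegree} to preserve the covering degree, and the incompatibility of $d_{opt}$ values across distinct strata in M\"oller's table. Your extra care about identifying the limit map with the optimal covering via Lemma \ref{PiSqTiled} only makes explicit what the paper's proof uses implicitly, and your closing remark about the finiteness statement matches the paper's derivation of Corollary \ref{corollary:MainCor}.
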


\begin{proof}
By contradiction, assume that $\mathcal{M}$ is not a Teichm\"uller curve.  By Proposition \ref{ZerosDegSameGen}, there exists a genus five surface $(X', \omega') \in \partial \cM$ that does not lie in the same stratum as $\cM$.  By \cite[Cor. 5.15]{MollerShimuraTeich}, the degree of each covering map $\pi_{opt}$ from a square-tiled surface in $\mathcal{M}$ to the torus is a fixed number $d_{opt}$.  By Lemma \ref{PiHasSameDegree}, the covering map of the torus by a surface generating $\mathcal{C}$ must also have degree $d_{opt}$.  Hence, $\mathcal{M}$ is an AIS in a stratum whose degree $d_{opt}$ is equal to the degree of the covering in an adjacent stratum.  However, by inspection of the table in \cite[Cor. 5.15]{MollerShimuraTeich}, no two strata have the same $d_{opt}$.  This contradiction implies that $\mathcal{M}$ is a Teichm\"uller curve, and it is arithmetic by Lemma \ref{TeichCurveIsArith}.
\end{proof}

\begin{corollary}
\label{corollary:MainCor}
There are at most finitely many Teichm\"uller curves with completely degenerate KZ-spectrum.
\end{corollary}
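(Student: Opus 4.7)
The plan is to run essentially the same argument as in the proof of Theorem \ref{theorem:MainThm}, but to feed in an infinite family of Teichm\"uller curves directly rather than a dense subset of such curves inside a higher-dimensional AIS. I would argue by contradiction: suppose there exist infinitely many distinct Teichm\"uller curves $\{\mathcal{C}_n\}_{n=1}^{\infty}$ with completely degenerate KZ-spectrum.

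First, I would invoke the genus restrictions recalled in the introduction: by \cite{AulicinoCompDegKZ} and the remarks before the statement of Theorem \ref{theorem:MainThm}, any AIS (and in particular any Teichm\"uller curve) with completely degenerate KZ-spectrum lies in genus three, four, or five, and in genera three and four exactly one such Teichm\"uller curve exists. Hence by passing to a tail I may assume every $\mathcal{C}_n$ is supported on genus five surfaces. Since the number of strata in genus five is finite, I may further pass to a subsequence so that all $\mathcal{C}_n$ lie in a single stratum $\mathcal{H}(\kappa)$.

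Next I would apply Lemma \ref{ZerosDeg} to this sequence, yielding a subsequence of $\{\mathcal{C}_n\}$ that converges to a Teichm\"uller curve $\mathcal{C}$ in an adjacent stratum $\mathcal{H}(\kappa')$, still with completely degenerate KZ-spectrum. By Lemma \ref{TeichCurveIsArith} each $\mathcal{C}_n$ is arithmetic, so every $\mathcal{C}_n$ admits an optimal torus covering $\pi_{opt}^{(n)}$, and by \cite[Cor. 5.15]{MollerShimuraTeich} the degree $d_{opt}(\kappa)$ depends only on the stratum. By Lemma \ref{PiHasSameDegree}, the limit covering associated to $\mathcal{C}$ has the same degree $d_{opt}(\kappa)$. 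But $\mathcal{C}$ lives in the adjacent stratum $\mathcal{H}(\kappa')$, where the optimal degree is $d_{opt}(\kappa')$. Inspection of the explicit table in \cite[Cor. 5.15]{MollerShimuraTeich} shows that no two strata sharing the completely degenerate property have the same value of $d_{opt}$, producing the desired contradiction.

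The only real obstacle is making sure the passage to the limit genuinely lands in a \emph{different} stratum and is not a genus drop or a disconnection; but these issues have already been ruled out inside Lemma \ref{ZerosDeg} (via \cite[Lem. 7.1]{AulicinoCompDegKZ} for closedness of the rank one locus and \cite[Lem. 9.15]{AulicinoCompDegKZ} for genus preservation), and the twist normalization via $H^{u}(t)$ or $H^{l}(t)$ guarantees that some saddle connection is actually degenerating. With these ingredients in place the argument is essentially the verbatim final paragraph of the proof of Theorem \ref{theorem:MainThm}, as noted in the remark preceding the corollary.
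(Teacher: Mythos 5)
Your proposal is correct and matches the paper's intended argument: the paper proves the corollary by observing that the proof of Theorem \ref{theorem:MainThm} used only the existence of an infinite family of Teichm\"uller curves with completely degenerate KZ-spectrum, and your write-up simply makes explicit the reduction to genus five and to a single stratum before running Lemma \ref{ZerosDeg}, Lemma \ref{PiHasSameDegree}, and the $d_{opt}$ comparison from \cite[Cor. 5.15]{MollerShimuraTeich}. No gaps; this is essentially the same proof.
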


\begin{proof}
By contradiction, if there are infinitely many Teichm\"uller curves contained in a stratum, then they equidistribute to a higher dimensional affine manifold $\cM$ by \cite{EskinMirzakhaniMohammadiOrbitClosures}.  If each of the infinitely many Teichm\"uller curves has completely degenerate KZ-spectrum, then $\cM$ must as well because each of the Teichm\"uller curves are contained in the closed set \RankOne .  However, Theorem \ref{theorem:MainThm} proved that there are no such higher dimensional affine manifolds, so there cannot be infinitely many Teichm\"uller curves.
\end{proof}

\subsection{Proof of Proposition \ref{ZerosDegSameGen}}

We follow the notation of Proposition \ref{ZerosDegSameGen}.  By Corollary \ref{DegSurfsCor}, if $X'$ has lower genus than $X$, then after filling punctures, it is contained in the Teichm\"uller curve of either the Ornithorynque, Eierlegende Wollmilchsau (EW), or torus.

If $X'$ has lower genus than $X$, then there must be one or more curves on $X$ that collapse to one or more nodes of $\tilde X'$.  After removing all of the nodes of $\tilde X'$, let $p_i$ be a puncture on $X'$ and $\gamma_i$ a small curve on $X'$ homotopic to $p_i$.  We call the region bounded by $\gamma_i$ containing $p_i$ its interior.  Naturally, $\gamma_i$ can be pulled back to a curve on $\tilde X'$.  Then we say that two punctures $p_i$ and $p_j$ are \emph{connected} if there is a path from the interior of $\gamma_i$ to the interior of $\gamma_j$ that does not cross $\gamma_i$ for all $i$.  Observe that this implicitly allows the path to freely traverse the parts of $\tilde X'$ carrying the zero differential.  A maximal set of connected punctures is called a \emph{connected set of punctures} $\{p_1, \ldots, p_n\}$.

Given a translation surface $(X,\omega)$, and a point $p \in X$, we can consider the set $\Gamma_p$ of all closed trajectories through $p$.  The trajectories may pass through the zeros of $\omega$.  Define the set
$$C_{p}(X,\omega) = \bigcap_{\gamma \in \Gamma_p} \gamma.$$
Some elementary observations are in order.  First, for a torus $E$ it is clear that this set $C_{p}(E) = \{p\}$ by embedding the torus in the plane as a square and considering the horizontal and vertical trajectories in the torus intersecting only at $p$.  Secondly, for a torus cover, $C_{p}(X,\omega) \subseteq \pi_{opt}^{-1}(p)$, which can be seen by lifting the horizontal and vertical trajectories from the torus to $(X, \omega)$.  Finally, the concept of a connected set of punctures and the set $C_{p}(X,\omega)$ are naturally related by

\begin{lemma}
\label{ConnectedPctsAtIntersect}
If $\mathcal{P} = \{p_1, \ldots, p_n\}$ is a connected set of punctures on a degenerate surface $(X', \omega') \in$ \RankOne ~where $\omega'$ is holomorphic, then
$$\mathcal{P} \subseteq C_{p_1}(X', \omega').$$
\end{lemma}

\begin{proof}
Recall Corollary \ref{RankOneCylConfig}, which states that $(X, \omega)$ is a union of homologous cylinders.  In particular, it is true that cutting the core curves of two of the cylinders will separate the surface into two regions.  By contradiction, assume that there are two connected punctures $p_1, p_2$ that do not lie on the same curve.  Then there is a pair of parallel (homologous) flat trajectories $\eta_1$, $\eta_2$ dividing the surface into two regions such that $p_1$ lies on one region and $p_2$ lies on the other.  The curves $\eta_1$, $\eta_2$ determine cylinders.  Let $(X, \omega)$ be a non-degenerate surface in a small neighborhood of $(X', \omega')$.  Then $\eta_1$ and $\eta_2$ persist on $(X, \omega)$ and determine cylinders as well.

Let $\gamma_i$ be a small curve homotopic to $p_i$ on $X'$, for all $i$.  We call the interior of $\gamma_i$ the region bounded by $\gamma_i$ that is homeomorphic to a branched cover of the punctured disc.  Furthermore, $\gamma_i$ persists as a small closed curve on $X$ for each $i$.  However, on $(X, \omega)$ there is a saddle connection in the interior of $\gamma_1$ that is identified to a saddle connection in the interior of $\gamma_2$.  This contradicts Corollary \ref{RankOneCylConfig} because $\eta_1$ and $\eta_2$ are not homologous on $(X, \omega)$.
\end{proof}

\subsubsection{Degeneration to the Torus}

Recall that a \emph{regular point} $p \in X$ is a point at which $\omega$ has neither a zero nor a pole, and a \emph{regular trajectory} is one that does not pass through any singularities (zeros or poles) of $\omega$, but we permit it to pass through punctures at which $\omega$ does not have singularities when continued across those punctures.

To clarify potential confusion with the covering maps that will occur throughout the rest of this paper, we make the following observation.  Though it is true that if a sequence of torus covers of fixed genus converges to a connected surface of the same genus $(X', \omega')$ with covering map $\pi'_{opt}$, then $\pi'_{opt}$ is an optimal covering map, this is not necessarily true if the genus of the covering decreases.  We use the notation $\tilde{\pi}'_{opt}: (X', \omega') \rightarrow E$ for the limiting covering map even though it is not optimal, and let $\pi'_{opt}: (X', \omega') \rightarrow E$ denote the optimal covering map (with no intermediate torus cover).  For example, if $(X', \omega')$ is the EW, then $\pi'_{opt}$ is the double cover of the torus branched over all four $2$-torsion points, and if $(X', \omega')$ is a punctured torus, then $\pi'_{opt}$ is the identity map.

We say that a node $p$ of a degenerate surface \emph{touches} a sheet $S$ of a torus cover, where $\pi_{opt}(S) = E$ is injective, if for all $\varepsilon > 0$, there exists a path $\gamma: [0,1] \rightarrow (X, \omega)$ of length $\varepsilon$ such that $\gamma(0) = p$ and $\gamma(1) \in S$.  For example, if $\mathcal{P}$ is a connected set of punctures containing exactly one point $p$, and $p$ is at a simple zero of $(X', \omega')$, then $p$ touches two sheets.  Similarly, if $\mathcal{P}$ is a connected set of punctures containing exactly two points $p$ and $q$, and $p$ and $q$ are at regular points of $(X', \omega')$, then $p$ and $q$ touch two sheets.

\begin{lemma}
\label{NoIsolatedAtRegPts}
Let $\cM$ be an AIS with completely degenerate KZ-spectrum.  Let $(X',\omega')$ be a degenerate surface carrying a holomorphic Abelian differential in the boundary of $\cM$.  If $p$ is a puncture at a regular point of $\omega'$ and $\mathcal{P}$ is the connected set of punctures containing $p$, then $\mathcal{P} \not= \{p\}$.
\end{lemma}

\begin{proof}
By contradiction, assume that $\mathcal{P} = \{p\}$.  Since both $(X,\omega)$ and $(X', \omega')$ are torus covers, a degeneration on $(X,\omega)$ occurs only if branch points on the torus covered by $(X, \omega)$ collide.  Moreover, if branch points collide, then a degeneration only occurs if the zeros of $\omega$ over those branch points collide.  However, by definition of a ramified covering, there is more than one sheet that touches a zero of $\omega$.  Let $z$ be a zero of $\omega$ that converges to $p$.  Let $T_1$ and $T_2$ be two sheets that map to the torus under $\pi_{opt}$ that touch $z$.  After degenerating to $(X', \omega')$, $p$ must lie on exactly one of those sheets, say $T_1$ without loss of generality.  However, a local neighborhood of the node that was removed to yield $\mathcal{P}$ had non-trivial intersection with both $T_1$ and $T_2$ before the degeneration.  Since $p$ does not lie at a zero of $\omega'$ by assumption, there must also be a puncture $p' \in T_2$ such that $p' \in \mathcal{P}$.  This contradiction proves the lemma.
\end{proof}

Since the intersection of all closed curves through a point on a torus is exactly that point, and a non-zero holomorphic differential on the torus has no zeros, the following corollary follows from Lemmas \ref{ConnectedPctsAtIntersect} and \ref{NoIsolatedAtRegPts}.

\begin{corollary}
\label{NoDegToTorus}
Let $\cM$ be an AIS with completely degenerate KZ-spectrum.  Let $(X',\omega')$ be a degenerate surface carrying a holomorphic Abelian differential in the boundary of $\cM$.  If we remove any nodes of $X'$ resulting from curves collapsing and continue the differential on $X'$ holomorphically across all punctures to yield a surface $\overline{X'}$ with no punctures, then $\overline{X'}$ is not a torus.
\end{corollary}

\subsubsection{Degeneration to the EW}

We continue by addressing the possible ways a genus five surface can degenerate to a genus three or genus four surface.

\begin{lemma}
\label{EWConnSetPctsLem}
Let $\cM$ be an AIS with completely degenerate KZ-spectrum.  Let $(X',\omega')$ be a degenerate surface carrying a holomorphic Abelian differential in the boundary of $\cM$ such that removing the nodes of $X'$, and filling every resulting puncture yields a surface in the Teichm\"uller curve of the EW.  If $\mathcal{P}$ is a connected set of punctures on $(X', \omega')$, then either 
\begin{itemize}
\item $\mathcal{P}$ contains exactly one point lying at a simple zero of $\omega'$, or
\item $\mathcal{P}$ contains exactly two points, both of which lie at regular points of $\omega'$.
\end{itemize}
\end{lemma}

\begin{proof}
Recall that the EW is a degree two ramified covering of a torus, so $\mathcal{P}$ contains at most two elements.  Since every point on the EW is either a regular point of $\omega'$ or a simple zero of $\omega'$, if $\mathcal{P}$ contains exactly one point, then by Lemma \ref{NoIsolatedAtRegPts}, it must lie at a simple zero.

Let $\tilde{\pi}_{opt}$ denote the covering map of the EW to the torus $E$.  If $p \in E$ and $\tilde{\pi}_{opt}^{-1}(p)$ contains exactly two points, then both lie at regular points of $\omega'$ as claimed.
\end{proof}

\begin{lemma}
\label{SheetsBdZeroOrds}
Let $\cM$ be a family of torus covers.  Let $(X',\omega')$ be a degenerate surface carrying a holomorphic Abelian differential in the boundary of $\cM$, and $\mathcal{P}$ be a connected set of punctures on $(X', \omega')$.  If $\mathcal{P}$ touches $n$ sheets and $z \in (X, \omega) \in \cM$ is a zero of $\omega$ that converges to at element of $\mathcal{P}$ on $(X', \omega')$, then $z$ has order at most $n-1$.
\end{lemma}

\begin{proof}
Observe that for any torus cover and $m \geq 1$, a zero of order $m$ in the cover touches $m + 1$ sheets of the cover.  Recall that as we converge from surfaces $(X, \omega) \in \cM$ to $(X', \omega')$, zeros of $\omega$ collide and curves pinch, only if the branch points on the underlying torus converge.  Trivially, each zero of $\omega$ lies exactly over a single branch point of the underlying torus.  In particular, the order of each zero is at most one less than the degree of the torus cover.  By the same reasoning, if collapsing a collection of zeros results in a node, which when removed results in the connected pair of punctures $\mathcal{P}$, then the number of sheets touching that node must bound the order of the zeros converging to that node.  Hence, the lemma follows.
\end{proof}

\begin{corollary}
\label{NoDegToEW}
Let $\cM$ be an AIS with completely degenerate KZ-spectrum.  Let $(X',\omega')$ be a degenerate surface carrying a holomorphic Abelian differential in the boundary of $\cM$.  If we remove any nodes of $X'$ resulting from curves collapsing and continue the differential on $X'$ holomorphically across all punctures to yield a surface $\overline{X'}$ with no punctures, then $(\overline{X'}, \omega')$ is not in the Teichm\"uller curve of the EW.
\end{corollary}

\begin{proof}
By contradiction, assume that $(\overline{X'}, \omega')$ is the EW.  By Lemma \ref{EWConnSetPctsLem}, every connected set of punctures touches at most two sheets of $(X', \omega')$.  Furthermore, it is trivial that if a collection of zeros collide and no curve pinches, then the resulting point is a zero whose order is equal to the sum of the zeros that collided.  Hence, any of the four simple zeros of $\omega'$ that do not lie at a puncture of $X'$, must have been a simple zero in a neighborhood of $(X', \omega')$ in $\cM$.  This implies that $\cM$ must lie in the principal stratum $\cH(1^8)$ in genus five because there are no higher order zeros of $\omega'$, and Lemma \ref{SheetsBdZeroOrds} implies that the zeros that do converge must be simple.

However, by Corollary \ref{DenseSubset} and the definition of the Forni subspace, $\cM$ contains a Teichm\"uller curve with completely degenerate KZ-spectrum.  By \cite[Cor. 5.15]{MollerShimuraTeich}, there is no Teichm\"uller curve with completely degenerate KZ-spectrum in the principal stratum in genus five.  This contradiction concludes the proof of the corollary.
\end{proof}


\subsubsection{Degeneration to the Ornithorynque}

\begin{lemma}
\label{OConnSetPctsLem}
Let $\cM$ be an AIS with completely degenerate KZ-spectrum.  Let $(X',\omega')$ be a degenerate surface carrying a holomorphic Abelian differential in the boundary of $\cM$ such that removing the nodes of $X'$, and filling every resulting puncture yields a surface in the Teichm\"uller curve of the Ornithorynque.  Then $(X', \omega')$ has exactly one connected set of punctures $\mathcal{P}$, and either 
\begin{itemize}
\item $\mathcal{P}$ contains exactly one point lying at a double zero of $\omega'$, or
\item $\mathcal{P}$ contains exactly two points, both of which lie at regular points of $\omega'$.
\end{itemize}
\end{lemma}

\begin{proof}
First observe that a connected set of punctures always results from the loss of at least one in genus.  Since $(X, \omega) \in \cM$ has genus five and $X'$ has genus four, there is exactly one connected set of punctures.

Recall that the Ornithorynque is a degree three branched covering of a torus, so $\mathcal{P}$ contains at most three elements.  Since every point on the Ornithorynque is either a regular point of $\omega'$ or a double zero of $\omega'$, if $\mathcal{P}$ contains exactly one point, then by Lemma \ref{NoIsolatedAtRegPts}, it must lie at a double zero.

Let $\pi'_{opt}$ denote the ramified covering map of the Ornithorynque to the torus $E$.  If $p \in E$ and $\mathcal{P} \subset \pi_{opt}^{\prime -1}(p)$ contains exactly two or three points, then both lie at regular points of $\omega'$.  However, if $\mathcal{P}$ contains three points, then this implies the pinching of at least two non-homologous curves, which implies that $(X, \omega) \in \cM$ has genus at least six.  Hence, the lemma follows.
\end{proof}

\begin{lemma}
\label{DegToOImpH2t4}
Let $\cM$ be an AIS with completely degenerate KZ-spectrum.  Let $(X',\omega')$ be a degenerate surface carrying a holomorphic Abelian differential in the boundary of $\cM$ such that removing the nodes of $X'$, and filling every resulting puncture yields a surface in the Teichm\"uller curve of the Ornithorynque.  Then $\cM \subset \cH(2^4)$.
\end{lemma}

\begin{proof}
By Lemmas \ref{OConnSetPctsLem} and \ref{SheetsBdZeroOrds}, $\cM$ lies in a stratum in genus five with zeros of order at most two because the Ornithorynque is a triple ramified cover of the torus.  By \cite[Cor. 5.15]{MollerShimuraTeich}, $\cM$ does not lie in the principal stratum because there are no Teichm\"uller curves with completely degenerate KZ-spectrum in that stratum.  Therefore, $\cM$ lies in a stratum with at least one double zero.

We refer to Figure \ref{OrniFig} throughout this proof.  Observe that for each of the three zeros on the Ornithorynque, there are infinitely many choices of directions such that there is a saddle connection from a zero to itself.  (This is a consequence of the fact that only three of the four $2$-torsion points of the torus are ramification points of the covering map.)

By contradiction, assume that $\cM \subset \cH(2, 1^6)$ lies in the stratum with six simple zeros.  Either the double zero converges to the connected set of punctures, or it does not.  If the double zero converges to the connected set of punctures, then each of the other double zeros on $(X',\omega')$ do not lie at punctures, but are formed by the collision of two simple zeros.  Let $v_1$ be one such double zero resulting from the collision of two simple zeros.  By \splin , let $v_1$ lie where it does in Figure \ref{OrniFig}.  In a neighborhood in $\cM$ of $(X', \omega')$, $v_1$ splits into two simple zeros.  The existence of infinitely many directions ``isolating'' $v_1$ yields the existence of a trajectory from one of the simple zeros to itself.  This contradicts \cite[Lem. 5.6]{MollerShimuraTeich} and leaves us with the assumption that the double zero does not converge to the connected set of punctures.  In this case the double zero $v_1$ is one of the double zeros on $(X', \omega')$, in the sense that there is neighborhood of $v_1$ prelimit with no other zero.  Let $v_2$ be a double zero on $(X', \omega')$ which is not at a puncture of $X'$.  Then by the assumption that $\cM \subset \cH(2, 1^6)$, $v_2$ is formed by the collision of two simple zeros.  Again we observe that on the Ornithorynque, there are infinitely many directions from $v_1$ to $v_2$.  This allows us to get the same contradiction as above because at most one of the two simple zeros can pass through $v_2$ before returning to itself.  Hence, $\cM$ lies in a stratum with at most four simple zeros, and at least two double zeros.

Since there are simple zeros on $(X, \omega) \in \cM$, but no simple zeros in the limit $(X', \omega')$, all of them must collide.  On the other hand, \cite[Lem. 5.18]{MollerShimuraTeich}, implies that if there are simple zeros, then they must lie over the $2$-torsion points of $E$ under the map $\pi'_{opt}$, which implies that they remain bounded distance away from each other for all time.\footnote{In fact, the Case i) assumption combined with \cite[Lem. 5.18]{MollerShimuraTeich} actually implies that in the stratum $\cH(2^3, 1^2)$ there are no Teichm\"uller curves with completely degenerate KZ-spectrum satisfying Case i) because it is a contradiction that there are only two odd order zeros and each of the four $2$-torsion points has an odd order zero over it.  We do not use this fact here, but highlight it to avoid confusion from such a blatant contradiction.}  Hence, the lemma follows.
\end{proof}

\begin{corollary}
\label{NoDegToO}
Let $\cM$ be an AIS with completely degenerate KZ-spectrum.  Let $(X',\omega')$ be a degenerate surface carrying a holomorphic Abelian differential in the boundary of $\cM$.  If we remove any nodes of $X'$ resulting from curves collapsing and continue the differential on $X'$ holomorphically across all punctures to yield a surface $\overline{X'}$ with no punctures, then $(\overline{X'}, \omega')$ is not in the Teichm\"uller curve of the Ornithorynque.
\end{corollary}

\begin{proof}
By Lemma \ref{DegToOImpH2t4}, $\cM \subset \cH(2^4)$.  Therefore the degeneration is given by exactly two double zeros colliding to form a single double zero $v_1$ and the connected set of punctures is exactly $\mathcal{P} = \{v_1\}$.  We claim that if $(X, \omega) \in \cM$ is in a sufficiently small neighborhood of $(X', \omega')$, then it can be realized as the surface in Figure \ref{OrniNbhdFig}.  Let $v'$ be the double zero converging to $v_1$, so in particular the distance between $v_1$ and $v'$ can be taken arbitrarily small.  We are careful to distinguish between the covering maps $\tilde{\pi}'_{opt}: (X', \omega') \rightarrow E$ and $\pi'_{opt}: (X', \omega') \rightarrow E'$ because they have degree nine (by \cite[Cor. 5.15]{MollerShimuraTeich}) and three, respectively.

Since the covering map $\pi_{opt}$ maps to a fixed torus (forgetting punctures) for all $(X, \omega)$ and $(X, \omega)$ converges to $(X', \omega')$, there exists an unramified triple covering $\pi: E' \rightarrow E$.  Hence, each of the three sheets of $\pi'_{opt}$ (which map to $E'$) in Figure \ref{OrniNbhdFig} can be cut into three sheets of $\tilde{\pi}'_{opt}$, each of which map to $E$, and the location of the zeros differ by at most the set of points that map to the same point under $\pi_{opt}$.  However, after considering a sufficiently small neighborhood $B \subset \overline{\cM}$ of $(X', \omega')$, the zeros remain on the same sheets of $\pi_{opt}: (X, \omega) \rightarrow E$ for all $(X, \omega) \in B$.  Since the limit $(X', \omega')$ can be realized as a triple cover, the surfaces $(X, \omega) \in B$ can also be realized as a triple cover because, of the nine points in $\pi_{opt}^{-1}(\pi_{opt}(v'))$, only three have distance $\varepsilon$ from copies of $v_1$.  This implies that $(X, \omega)$ admits a triple cover to the torus, which contradicts \cite[Cor. 5.15]{MollerShimuraTeich}.

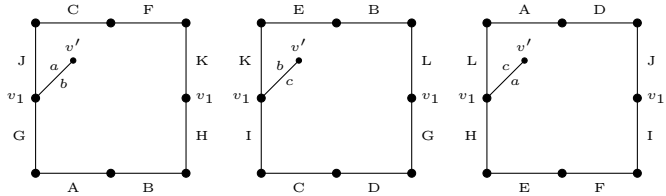
\begin{figure}[htb]
\centering
\begin{minipage}{0.4\linewidth}
\centering
\begin{tikzpicture}[scale=0.50]
\draw (-8,0) -- (-8,4) -- (-4,4) -- (-4,0) -- cycle;
\draw (-2,0) -- (-2,4) -- (2,4) -- (2,0) -- cycle;
\draw (4,0) -- (4,4) -- (8,4) -- (8,0) -- cycle;
\foreach \x in {(-8,0),(-8,2),(-8,4),(-6,0),(-6,4),(-4,4),(-4,2),(-4,0)} \filldraw[fill=black] \x circle (3pt);
\foreach \x in {(-2,0),(-2,2),(-2,4),(0,0),(0,4),(2,4),(2,2),(2,0)} \filldraw[fill=black] \x circle (3pt);
\foreach \x in {(4,0),(4,2),(4,4),(6,0),(6,4),(8,4),(8,2),(8,0)} \filldraw[fill=black] \x circle (3pt);
\draw (-7,4) node[above] {\tiny C};
\draw (-5,4) node[above] {\tiny F};
\draw (-7,0) node[below] {\tiny A};
\draw (-5,0) node[below] {\tiny B};
\draw (-8,1) node[left] {\tiny G};
\draw (-8,3) node[left] {\tiny J};
\draw (-4,1) node[right] {\tiny H};
\draw (-4,3) node[right] {\tiny K};

\draw (-1,4) node[above] {\tiny E};
\draw (1,4) node[above] {\tiny B};
\draw (-1,0) node[below] {\tiny C};
\draw (1,0) node[below] {\tiny D};
\draw (-2,1) node[left] {\tiny I};
\draw (-2,3) node[left] {\tiny K};
\draw (2,1) node[right] {\tiny G};
\draw (2,3) node[right] {\tiny L};

\draw (5,4) node[above] {\tiny A};
\draw (7,4) node[above] {\tiny D};
\draw (5,0) node[below] {\tiny E};
\draw (7,0) node[below] {\tiny F};
\draw (4,1) node[left] {\tiny H};
\draw (4,3) node[left] {\tiny L};
\draw (8,1) node[right] {\tiny I};
\draw (8,3) node[right] {\tiny J};

\draw (-8,2) node[left] {\tiny $v_1$};
\draw (-4,2) node[right] {\tiny $v_1$};
\draw (-2,2) node[left] {\tiny $v_1$};
\draw (2,2) node[right] {\tiny $v_1$};
\draw (4,2) node[left] {\tiny $v_1$};
\draw (8,2) node[right] {\tiny $v_1$};

\draw (-8,2) -- (-7,3);
\filldraw[fill=black] (-7,3) circle (2pt);
\draw (-7,3) node[above] {\tiny $v'$};
\draw (-7.5,2.5) node[above] {\tiny $a$};
\draw (-7.25,2.75) node[below] {\tiny $b$};

\draw (-2,2) -- (-1,3);
\filldraw[fill=black] (-1,3) circle (2pt);
\draw (-1,3) node[above] {\tiny $v'$};
\draw (-1.5,2.5) node[above] {\tiny $b$};
\draw (-1.25,2.75) node[below] {\tiny $c$};

\draw (4,2) -- (5,3);
\filldraw[fill=black] (5,3) circle (2pt);
\draw (5,3) node[above] {\tiny $v'$};
\draw (4.5,2.5) node[above] {\tiny $c$};
\draw (4.75,2.75) node[below] {\tiny $a$};
\end{tikzpicture}
\end{minipage}
\caption{A Neighborhood of the Ornithorynque in $\cH(2^4)$}
\label{OrniNbhdFig}
\end{figure}

Alternatively, a more explicit contradiction can be drawn from Figure \ref{OrniNbhdFig}.  Let $\varepsilon > 0$ denote the distance from $v_1$ to $v'$.  Then after acting by a combination of upper or lower triangular matrices, if necessary, it suffices to assume that $v'$ lies above $v_1$ as in Figure \ref{OrniNbhdFig}.  There is a unique choice for the identification of the slits from $v_1$ to $v'$ such that $v_1$ and $v'$ are double zeros, and $v_1$ remains a double zero after $v'$ converges to $v_1$.  By inspection, the horizontal trajectory from $b$ to $b$ is strictly less than the length of a parallel trajectory, which contradicts Corollary \ref{RankOneCylConfig} and completes the proof.
\end{proof}

\subsubsection{Proof of the Proposition}

\begin{proof}[Proof of Proposition \ref{ZerosDegSameGen}]
Combining Lemma \ref{DegSurfExists} and Corollary \ref{DegSurfsCor}, we get the existence of a degenerate surface $(X', \omega')$ in the boundary of $\cM$ with completely degenerate KZ-spectrum, such that $X'$ has genus one, three, four, or five.  By Corollaries \ref{NoDegToTorus}, \ref{NoDegToEW}, and \ref{NoDegToO}, $X'$ has genus five.
\end{proof}

\bibliography{fullbibliotex}{}

\end{document}